\numberwithin{equation}{section}
\def\3bar{{|\hspace{-.02in}|\hspace{-.02in}|}}
\def\E{{\mathcal{E}}}
\def\T{{\mathcal{T}}}
\def\dQ{{\mathbb{Q}}}
\def\b0{\boldsymbol{0}}
\def\sumT{\sum_{T\in\mathcal{T}_h}}     
\def\bn{{\mathbf{n}}}
\def\bf{{\mathbf{f}}}
\def\bq{{\mathbf{q}}}
\newtheorem{remark}{Remark}[section]
\newtheorem{algorithm1}{Weak Galerkin Algorithm}
 \newcommand{\eps}{\varepsilon}
 \newcommand{\Real}{\mathbb{R}}
 \newcommand{\trb}[1]{|\!|\!|#1|\!|\!|}
\begin{document}

\title{The weak Galerkin method for eigenvalue problems}

\author{Hehu Xie\thanks{LSEC and Institute of Computational Mathematics and Scientific/Engineering Computing,
Academy of Mathematics and Systems Science, Chinese Academy of
Sciences, Beijing 100190, China (hhxie@lsec.cc.ac.cn) } \and Qilong
Zhai\thanks{Department of Mathematics, Jilin University, Changchun,
China (diql15@mails.jlu.edu.cn). } \and Ran Zhang\thanks{Department
of Mathematics, Jilin University, Changchun, China
(zhangran@mail.jlu.edu.cn). The research of Zhang was supported in
part by China Natural National Science Foundation(11271157,
11371171, 11471141), and by the Program for New Century Excellent
Talents in University of Ministry of Education of China.}}

\maketitle

\begin{abstract} This article is devoted to computing the eigenvalue
of the Laplace eigenvalue problem by the weak Galerkin (WG) finite
element method with emphasis on obtaining lower bounds. The WG
method is on the use of weak functions and their weak derivatives
defined as distributions. Weak functions and weak derivatives can be
approximated by polynomials with various degrees. Different
combination of polynomial spaces leads to different WG finite
element methods, which makes WG methods highly flexible and
efficient in practical computation.  We establish the optimal-order
error estimates for the WG finite element approximation for the
eigenvalue problem. Comparing with the classical
nonconforming finite element method which can just provide lower
bound approximation by linear elements with only the second order
convergence, the WG methods can naturally provide lower bound
approximation with a high order convergence (larger than $2$). Some
numerical results are also presented to demonstrate the efficiency
of our theoretical results.
\end{abstract}

\begin{keywords} weak Galerkin finite element methods, eigenvalue problem,
lower bound, error estimate, finite element method.
\end{keywords}

\begin{AMS}
Primary, 65N30, 65N15, 65N12, 74N20; Secondary, 35B45, 35J50, 35J35
\end{AMS}

\section{Introduction}

The study of eigenvalues and eigenfunctions of partial differential
operators both in theoretical and approximation grounds is very
important in many fields of sciences, such as quantum mechanics,
fluid mechanics, stochastic process, structural mechanics, etc.
Thus, a fundamental work is to find the eigenvalues and
corresponding eigenfunctions of partial differential operators.

In this paper, we consider the following model problem: Find
$(\lambda, u)$ such that
\begin{equation}\label{problem-eq}
\left\{
\begin{array}{rcl}
-\Delta u &=& \lambda u,\quad \text{in }\Omega,\\
u &=& 0,\quad~~ \text{on }\partial\Omega,\\
\int_\Omega u^2d\Omega &=& 1,
\end{array}
\right.
\end{equation}
where $\Omega$ is a polyhedral domain in $\Real^d$ $(d=2,3)$. For
simplicity, we are only concerned with the case $d=2$, while all the
conclusions can be extended to $d=3$ trivially. The classical
variational form of problem (\ref{problem-eq}) is defined as
follows: Find $u\in H_0^1(\Omega)$ and $\lambda\in\Real$ such that
$b(u,u)=1$ and
\begin{eqnarray}\label{vartion-form}
a(u,v)=\lambda b(u,v),\quad \forall v\in H_0^1(\Omega),
\end{eqnarray}
where
\begin{eqnarray*}
a(u,v)=(\nabla u,\nabla v)\ \ \ \ \ {\rm and}\ \ \ \ \ b(u,v)=(u,v).
\end{eqnarray*}
It is well known that the problem (\ref{vartion-form}) has the eigenvalue sequence \cite{IBEig}
\begin{eqnarray*}
0<\lambda_1\leq \lambda_2\leq\ldots\leq\lambda_j\leq\ldots\longrightarrow +\infty
\end{eqnarray*}
with the corresponding eigenfunction sequence
\begin{eqnarray*}
u_1,u_2,\ldots,u_j,\ldots,
\end{eqnarray*}
which satisfies the property $b(u_i,u_j)=\delta_{ij}$
($\delta_{ij}$ denotes the Kronecker function).

The first aim of this paper is to analyze the weak Galerkin finite element method
for the eigenvalue problem. We will give the corresponding error estimates
 for the eigenpair approximations by the
weak Galerkin finite element method based on the standard theory
from \cite{IBEig,Boffi10}.

The eigenvalue is a positive real number, and thus it is credible if
we get both the upper and lower bounds. In fact, a simple
combination of lower and upper bounds will present intervals to
which exact eigenvalue belongs. For the Laplace eigenvalue problems,
since the Rayleigh quotient and minimum-maximum principle, it is
easily to obtain the upper bounds of eigenvalue by any standard
conforming finite element methods \cite{Feng65, SF73}. For the lower
bounds, the computation is of high interest and generally more
difficult. Influenced by the minimum-maximum principle, people try
to obtain the lower bounds with the nonconforming element methods.
In fact, the lower bound property of eigenvalues by nonconforming
elements are observed in numerical aspects at the beginning
\cite{Boffi10, LTZ0505, RT92, SW10, WTDG73, ZC67}. After that, a
series of results make progress in this aspect, e.g., Lin and Lin
\cite{LL06} proved that the non-conforming ${\rm EQ}_1^{\rm rot}$
rectangular element approximates exact eigenvalues associated with
smooth eigenfunctions from below in 2006.  Hu, Huang and Shen
\cite{HHS04} gets the lower bound of Laplace eigenvalue problem by
conforming linear and bilinear elements together with the mass
lumping method.
A general kind of expansion method, which was first proved in its
full term in \cite{ZhangYangChen} by the similar argument in
\cite{AD}, is extensively used in \cite{HuHuangLin, Li08,LuoLinXie}
and the references cited therein. Another interesting way is
provided in \cite{Liu} and the corresponding paper cited therein.

Our work was inspired by some recent studies of lower end
approximation of eigenvalues by finite element discretization for
some elliptic partial differential operators
 \cite{AD,HuHuangLin,LuoLinXie, LX1,ZhangYangChen}. One
crucial technical ingredient that is needed in the analysis
is some lower bound of the eigenfunction
discretization error by the finite element method. The mainly
challenge aspect is to design the high order elements to present the
lower bound scheme. So far, the lower bound finite element methods are
mainly first order nonconforming finite element methods. It is desired
to design some high order numerical methods to obtain the lower bound
eigenvalue approximations which is the second aim of this paper.

Recently a new class of finite element methods, called weak Galerkin
(WG) finite element methods have been developed for the partial
differential equations for its highly flexible and robust
properties. The WG method refers to a numerical scheme for partial
differential equations in which differential operators are
approximated by weak forms as distributions over a set of
generalized functions. It has been demonstrated that the WG method
is highly flexible and robust as a numerical technique that employs
discontinuous piecewise polynomials on polygonal or polyhedral
finite element partitions. This thought was first proposed in
\cite{WY13} for a model second order elliptic problem in 2012, and
further developed in \cite{LW13, mwy, mwy0927, mwwy0618, mwy3818,
mwyz-maxwell, mwyz-biharmonic, WY14, ZZ15} with other applications.
In order to enforce necessary weak continuities for approximating
functions, proper stabilizations are employed. The main advantages
of the WG method include the finite element partition can be of
polytopal type with certain regular requirements, the weak finite
element space is easy to construct with any given approximation
requirement and the WG schemes can be hybridized so that some
unknowns associated with the interior of each element can be locally
eliminated, yielding a system of linear equations involving much
less number of unknowns than what it appears.

The objective of the present paper is twofold. First, we will
introduce weak Galerkin method for solving the Laplacian eigenvalue
problem, which has the optimal-order error estimates. Furthermore we
investigate the performance of the WG methods for presenting the
guaranteed lower bound approximation of the eigenvalues problem
under the assumption that the global mesh-size is sufficiently
small. To demonstrate the potential of WG finite element methods in
solving eigenvalue problems, we will restrict ourselves to any order
WG elements (even for higher order WG finite element spaces) and
investigate the robustness and effectiveness of this method.

An outline of the paper is as follows. In Section 2, the necessary
notations, definitions of weak functions and weak derivatives are
introduced. The WG finite element scheme of the Laplace eigenvalue
problem is stated. Error estimates for the boundary value problem
are presented in Section 3. In Section 4, we establish the error
estimates for the WG finite element approximation for the eigenvalue
problem. Section 5 is devoted to presenting any higher order accuracy
lower bound approximation of the eigenvalues. Some numerical results
are presented in Section 6 to demonstrate the efficiency of our
theoretical results and some concluding remarks are given in the
last section.

\section{The weak Galerkin scheme for the eigenvalue problem}

\subsection{Preliminaries and notations}

First, we present some notation which will be used in this paper. We denote $(\cdot, \cdot)_{m,\omega}$
and $\|\cdot\|_{m,\omega}$ the inner-product and the norm on $H^m(\omega)$. If the region
$\omega$ is an edge or boundary of some element, we use $\langle\cdot,\cdot\rangle
_{m,\omega}$ instead of $(\cdot, \cdot)_{m,\omega}$. We shall drop the subscript
when $m=0$ or $\omega=\Omega$. In this paper, $P_r(\omega)$ denotes the space of polynomials on $\omega$
with degree no more than $r$. Throughout this paper, $C$ denotes a generic positive constant which
is independent of the mesh size.

Let $\T_h$ be a partition of the domain $\Omega$, and the elements
in $\T_h$ are polygons satisfying the regular assumptions specified
in \cite{WY14}. Denote by $\E_h$ the edges in $\T_h$, and by
$\E_h^0$ the interior edges $\E_h\backslash \partial\Omega$. For
each element $T\in\T_h$, $h_T$ represents the diameter of $T$, and
$h=\max_{T\in\T_h} h_T$ denotes the mesh size.

\subsection{A weak Galerkin scheme}
Now we introduce a weak Galerkin scheme solving the problem (\ref{problem-eq}).
 For a given integer $k\ge 1$, define the Weak Galerkin (WG) finite
element space
\begin{eqnarray*}
V_h=\{v=(v_0,v_b):v_0|_T\in P_k(T), v_b|_e\in P_{k-1}(e), \forall
T\in\T_h, e\in\E_h, \text{ and }v_b=0 \text{ on }\partial\Omega\}.
\end{eqnarray*}
For each weak function $v\in V_h$, we can define its weak gradient
$\nabla_w v$ by distribution element-wisely as follows.
\begin{definition}
For each $v\in V_h$, $\nabla_w v|_T$ is the unique polynomial in
$[P_{k-1}(T)]^2$ satisfying
\begin{eqnarray}\label{def-wgradient}
(\nabla_w v,\bq)_T=-(v_0,\nabla\cdot\bq)_T+\langle v_b,\bq\cdot\bn
\rangle_{\partial T},\quad\forall\bq\in [P_{k-1}(T)]^2,
\end{eqnarray}
where $\bn$ denotes the outward unit normal vector.
\end{definition}

For the aim of analysis, some projection operators are also employed in this paper. Let $Q_0$
denote the $L^2$ projection from $L^2(T)$ onto $P_k(T)$, $Q_b$
denote the $L^2$ projection from $L^2(e)$ onto $P_{k-1}(e)$, and $\dQ_h$
denote the $L^2$ projection from $[L^2(T)]^2$ onto $[P_{k-1}(T)]^2$.
Combining $Q_0$ and $Q_b$ together, we can define $Q_h=\{Q_0,Q_b\}$,
which is a projection from $H_0^1(\Omega)$ onto $V_h$.

Now we define three bilinear forms on $V_h$ that for any $v,w\in V_h$,
\begin{eqnarray*}
s(v,w)&=&\sumT h_T^{-1+\eps}\langle Q_b v_0-v_b,Q_b w_0-w_b\rangle_{\partial T},
\\
a_w(v,w)&=&(\nabla_w v,\nabla_w w)+s(v,w),
\\
b_w(v,w)&=&(v_0,w_0),
\end{eqnarray*}
where $0\le\eps <1$ is a small constant parameter to be selected.
With these preparations we can give the following weak Galerkin algorithm.

\begin{algorithm1}
Find $u_h\in V_h$, $\lambda_h\in\Real$ such that $b_w(u_h,u_h)=1$ and
\begin{eqnarray}\label{WG-scheme}
a_w(u_h,v)=\lambda_h b_w(u_h,v),\quad\forall v\in V_h.
\end{eqnarray}
\end{algorithm1}

\section{Error estimates for the boundary value problem}

In order to analyze the error of the eigenvalue problem by the weak Galerkin
method, we need some estimates for the boundary value problem. The main idea is similar to
\cite{WY2} but some modifications.

\subsection{A weak Galerkin method for the Poisson equation}
In this section, we consider the weak Galerkin method for
the following Poisson equation
\begin{equation}\label{Poisson-eq1}
\left\{
\begin{array}{rcl}
-\Delta u&=&f,\quad\text{in }\Omega,\\
u &=& 0,\quad \text{on }\partial\Omega.
\end{array}
\right.
\end{equation}
The corresponding weak Galerkin scheme is to find $u_h\in V_h$ such
that
\begin{eqnarray}\label{WG-scheme1}
a_w(u_h,v)=(f,v_0),\quad \forall v\in V_h.
\end{eqnarray}

Define a semi-norm on $V_h$ as follows
\begin{eqnarray*}
\trb{v}^2=a_w(v,v),\quad\forall v\in V_h.
\end{eqnarray*}
We claim that $\trb{\cdot}$ is indeed a norm on $V_h$. In order to check the
positive property, suppose
$\trb v=0$ . Then we have $\nabla_w v=0$ in $T$ and $Q_b v_0=v_b$ on $\partial T$
for all $T\in\T_h$. It follows that
\begin{eqnarray*}
(\nabla v_0,\nabla v_0)_T&=&-(v_0,\nabla\cdot\nabla v_0)_T+\langle v_0,
\nabla v_0\cdot\bn\rangle_{\partial T}
\\
&=&-(v_0,\nabla\cdot\nabla v_0)_T+\langle v_b,\nabla v_0\cdot\bn\rangle_{\partial T}
+\langle Q_b v_0-v_b,\nabla v_0\cdot\bn\rangle_{\partial T}
\\
&=&(\nabla v_0,\nabla_w v)_T=0,
\end{eqnarray*}
so that $v_0$ is piecewise constant and $v_b=Q_b v_0=v_0$ on $\partial T$.
Notice that $v_b=0$ on $\partial\Omega$, we can obtain that $v=0$.
For the analysis, we also define another norm on $V_h$ as
\begin{eqnarray*}
\trb v_1^2=\|\nabla_w v\|^2+\sumT h_T^{-1}\|Q_b v_0-v_b\|^2_{\partial T}.
\end{eqnarray*}
Furthermore, it is easy to check that the weak Galerkin scheme (\ref{WG-scheme1})
is symmetric and positive definite, which has a unique solution.

The following commutative property plays an essential role in the
forthcoming proof, which shows that the weak gradient operator is an
approximation of the classical gradient operator.
\begin{lemma}\rm{(\cite{WY2})}
For any element $T\in\T_h$, the following commutative property holds true,
\begin{eqnarray}\label{commu-prop}
\nabla_w(Q_h\varphi)=\dQ_h(\nabla\varphi),\quad\forall\varphi\in H^1(T).
\end{eqnarray}
\end{lemma}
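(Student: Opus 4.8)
The plan is to verify the defining relation (\ref{def-wgradient}) for the weak gradient applied to $Q_h\varphi$ and show that $\dQ_h(\nabla\varphi)$ satisfies it. Fix $T\in\T_h$ and an arbitrary test function $\bq\in[P_{k-1}(T)]^2$. By definition, $\nabla_w(Q_h\varphi)|_T$ is the unique element of $[P_{k-1}(T)]^2$ with
\begin{eqnarray*}
(\nabla_w(Q_h\varphi),\bq)_T=-(Q_0\varphi,\nabla\cdot\bq)_T+\langle Q_b\varphi,\bq\cdot\bn\rangle_{\partial T}.
\end{eqnarray*}
Since $\nabla\cdot\bq\in P_{k-2}(T)\subset P_k(T)$ and $Q_0$ is the $L^2$ projection onto $P_k(T)$, we have $(Q_0\varphi,\nabla\cdot\bq)_T=(\varphi,\nabla\cdot\bq)_T$. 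Similarly $\bq\cdot\bn|_e\in P_{k-1}(e)$ on each edge $e\subset\partial T$, so by the definition of $Q_b$ we get $\langle Q_b\varphi,\bq\cdot\bn\rangle_{\partial T}=\langle\varphi,\bq\cdot\bn\rangle_{\partial T}$. Substituting both identities gives
\begin{eqnarray*}
(\nabla_w(Q_h\varphi),\bq)_T=-(\varphi,\nabla\cdot\bq)_T+\langle\varphi,\bq\cdot\bn\rangle_{\partial T}.
\end{eqnarray*}

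Next I would apply the ordinary integration by parts (Green's formula) on $T$ to the right-hand side, which is valid since $\varphi\in H^1(T)$ and $\bq$ is a smooth polynomial vector field:
\begin{eqnarray*}
-(\varphi,\nabla\cdot\bq)_T+\langle\varphi,\bq\cdot\bn\rangle_{\partial T}=(\nabla\varphi,\bq)_T.
\end{eqnarray*}
Finally, because $\bq\in[P_{k-1}(T)]^2$ and $\dQ_h$ is the $L^2$ projection onto $[P_{k-1}(T)]^2$, we have $(\nabla\varphi,\bq)_T=(\dQ_h(\nabla\varphi),\bq)_T$. Combining the displayed equalities yields $(\nabla_w(Q_h\varphi),\bq)_T=(\dQ_h(\nabla\varphi),\bq)_T$ for all $\bq\in[P_{k-1}(T)]^2$, and since both sides lie in $[P_{k-1}(T)]^2$, the uniqueness in Definition 2.1 forces $\nabla_w(Q_h\varphi)=\dQ_h(\nabla\varphi)$ on $T$.

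There is no serious obstacle here; the only points requiring a little care are the degree bookkeeping — checking that $\nabla\cdot\bq$ and $\bq\cdot\bn$ land in the correct polynomial spaces so that the projections $Q_0$ and $Q_b$ may be dropped against them — and confirming that Green's formula is legitimate at the $H^1$ regularity level, which it is by density of smooth functions in $H^1(T)$ together with the smoothness of $\bq$. The argument is entirely local to each element $T$, so no mesh-regularity or global assembly considerations enter.
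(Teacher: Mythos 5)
Your proof is correct and follows exactly the paper's own argument: drop the projections $Q_0$, $Q_b$, and $\dQ_h$ against the polynomial test data, then apply integration by parts. The additional degree bookkeeping you supply is a helpful elaboration but not a different route.
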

\begin{proof}
From the definition of the weak gradient (\ref{def-wgradient}) and the
integration by parts, we have that for any $\bq\in [P_{k-1}(T)]^2$
\begin{eqnarray*}
(\nabla_w(Q_h\varphi),\bq)_T&=&-(Q_0\varphi,\nabla\cdot\bq)_T
+\langle Q_b\varphi,\bq\cdot\bn\rangle_{\partial T}
\\
&=&-(\varphi,\nabla\cdot\bq)_T
+\langle \varphi,\bq\cdot\bn\rangle_{\partial T}
\\
&=&(\nabla\varphi,\bq)_T=(\dQ_h(\nabla\varphi),\bq)_T,
\end{eqnarray*}
which completes the proof.
\end{proof}

\subsection{Error equation}
Suppose $u$ is the solution of (\ref{Poisson-eq1}), and
$u_h$ is the numerical solution of (\ref{WG-scheme1}). Denote by $e_h$
the error that
\begin{eqnarray*}
e_h=Q_h u-u_h=\{Q_0 u-u_0,Q_b u-u_b\}.
\end{eqnarray*}
Then $e_h$ should satisfy the following equation.
\begin{lemma}
Let $e_h$ be the error of the weak Galerkin scheme (\ref{WG-scheme1}). Then, for any $v\in V_h$,
we have
\begin{eqnarray}\label{error-eqn}
a_w(e_h,v)=\ell(u,v)+s(Q_h u,v),
\end{eqnarray}
where
\begin{eqnarray*}
\ell(u,v)=\sumT\langle(\nabla u-\dQ_h\nabla u)\cdot\bn, v_0-v_b\rangle_{\partial T}.
\end{eqnarray*}
\end{lemma}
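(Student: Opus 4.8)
The plan is to derive the error equation by testing the weak Galerkin formulation against an arbitrary $v\in V_h$ and carefully integrating by parts, keeping track of the boundary terms that the weak gradient definition produces. First I would start from the definition $a_w(e_h,v)=a_w(Q_hu,v)-a_w(u_h,v)$, and use the scheme \eqref{WG-scheme1} to replace $a_w(u_h,v)$ by $(f,v_0)=-(\Delta u,v_0)$. Next, for the term $(\nabla_w(Q_hu),\nabla_w v)$ I would invoke the commutative property \eqref{commu-prop} to rewrite it as $(\dQ_h(\nabla u),\nabla_w v)=(\nabla u,\nabla_w v)$, the last equality holding because $\nabla_w v\in[P_{k-1}(T)]^2$ and $\dQ_h$ is the $L^2$-projection onto exactly that space.

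The heart of the argument is then to compute $\sumT(\nabla u,\nabla_w v)_T$ by running the definition of the weak gradient \eqref{def-wgradient} in reverse. On each $T$, with $\bq=\dQ_h(\nabla u)$ we get $(\nabla_w v,\dQ_h\nabla u)_T=-(v_0,\nabla\cdot(\dQ_h\nabla u))_T+\langle v_b,\dQ_h\nabla u\cdot\bn\rangle_{\partial T}$; but since $\nabla u$ itself need not be a polynomial I would instead first replace $(\nabla u,\nabla_w v)_T$ by $(\dQ_h\nabla u,\nabla_w v)_T$ (legitimate because $\nabla_w v$ is a polynomial of degree $k-1$), then apply \eqref{def-wgradient}. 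This yields
\begin{eqnarray*}
\sumT(\nabla u,\nabla_w v)_T
=\sumT\Big(-(v_0,\nabla\cdot\dQ_h\nabla u)_T+\langle v_b,\dQ_h\nabla u\cdot\bn\rangle_{\partial T}\Big).
\end{eqnarray*}
Now I would integrate by parts on the genuine term $-(\Delta u,v_0)_T=(\nabla u,\nabla v_0)_T-\langle \nabla u\cdot\bn,v_0\rangle_{\partial T}$, and then apply integration by parts once more to $(\nabla u,\nabla v_0)_T=(\dQ_h\nabla u,\nabla v_0)_T+(\nabla u-\dQ_h\nabla u,\nabla v_0)_T$; the first piece integrates by parts back to $-(v_0,\nabla\cdot\dQ_h\nabla u)_T+\langle v_0,\dQ_h\nabla u\cdot\bn\rangle_{\partial T}$, while the second piece vanishes because $\nabla v_0\in[P_{k-1}(T)]^2$ and $\nabla u-\dQ_h\nabla u\perp[P_{k-1}(T)]^2$. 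Collecting everything, the volume terms $-(v_0,\nabla\cdot\dQ_h\nabla u)_T$ cancel between the two expressions, leaving exactly the edge terms
\begin{eqnarray*}
\sumT\langle v_0-v_b,\dQ_h\nabla u\cdot\bn\rangle_{\partial T}-\sumT\langle v_0,\nabla u\cdot\bn\rangle_{\partial T}
=-\sumT\langle(\nabla u-\dQ_h\nabla u)\cdot\bn,v_0-v_b\rangle_{\partial T},
\end{eqnarray*}
where I have used that $\sumT\langle v_b,\nabla u\cdot\bn\rangle_{\partial T}=0$: the single-valued $v_b$ on interior edges meets opposite outward normals, and $v_b=0$ on $\partial\Omega$. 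Wait — a sign check is needed: the quantity just displayed should match $-\ell(u,v)$, so in the final assembly one gets $a_w(e_h,v)=\ell(u,v)+s(Q_hu,v)$ once the stabilization terms from $a_w(Q_hu,v)$, namely $s(Q_hu,v)$, are carried along (note $s(u_h,v)$ has already been absorbed into the scheme on the right). The main obstacle is bookkeeping: getting every sign and every boundary term right through the double integration by parts, and in particular being careful that $\sumE$-type interior-edge cancellations use the single-valuedness of $v_b$ (and of $\dQ_h\nabla u$ is \emph{not} single-valued, so those terms genuinely survive as the consistency error $\ell$). None of the steps is deep, but the chance of a stray sign is high, so I would verify the identity on the lowest-order case $k=1$ as a sanity check.
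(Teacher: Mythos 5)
Your argument is correct and is essentially the paper's own proof: both apply the commutativity $\nabla_w Q_h u=\dQ_h\nabla u$, run the weak-gradient definition with $\bq=\dQ_h\nabla u$, integrate by parts against $\nabla v_0$, and use the single-valuedness of $v_b$ and of $\nabla u\cdot\bn$ on interior edges (with $v_b=0$ on $\partial\Omega$) to assemble the consistency term $\ell(u,v)$, the signs resolving exactly as you indicate.
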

\begin{proof}
From the definition of the weak gradient (\ref{def-wgradient}) and the commutative
property (\ref{commu-prop}), we can obtain on each element $T\in\T_h$ that
\begin{eqnarray*}
(\nabla_w Q_h u,\nabla_w v)_T&=&(\dQ_h\nabla u,\nabla_w v)_T
\\
&=&-(v_0,\nabla\cdot\dQ_h\nabla u)_T+\langle v_b,\dQ_h(\nabla u)\cdot\bn\rangle
_{\partial T}
\\
&=&(\nabla v_0,\dQ_h \nabla u)_T-\langle v_0-v_b,\dQ_h(\nabla u)\cdot\bn\rangle
_{\partial T}
\\
&=&(\nabla v_0,\nabla u)_T-\langle\dQ_h(\nabla u)\cdot\bn, v_0-v_b\rangle
_{\partial T}.
\end{eqnarray*}
Summing over all elements and it follows that
\begin{eqnarray*}
(\nabla_w Q_h u,\nabla_w v)&=&(\nabla v_0,\nabla u)-\sumT\langle\dQ_h(\nabla u)
\cdot\bn, v_0-v_b\rangle_{\partial T}
\\
&=&(f,v_0)+\sumT\langle\nabla u\cdot\bn,v_0\rangle_{\partial T}
-\sumT\langle\dQ_h(\nabla u)\cdot\bn, v_0-v_b\rangle_{\partial T}
\\
&=&(f,v_0)+\ell(u,v).
\end{eqnarray*}
Notice that the numerical solution $u_h$ satisfies (\ref{WG-scheme1}).
Then we can derive that
\begin{eqnarray*}
a_w(e_h,v)=\ell(u,v)+s(Q_h u,v),\ \ \ \ \forall v\in V_h,
\end{eqnarray*}
which completes the proof.
\end{proof}

In order to estimate the right hand side terms of (\ref{error-eqn}),
we still need some technique tools introduced in \cite{WY14}.

\begin{lemma}{\rm(\cite{WY14})}\label{Trace inequality}
~\emph{\rm (}Trace Inequality{\rm)} Let $\mathcal{T}_h$ be a
partition of the domain $\Omega$ into polygons in 2D or polyhedra in
3D. Assume that the partition $\mathcal{T}_h$ satisfies the
Assumptions A1, A2, and A3 as stated in \cite{WY14}. Let $p>1$ be
any real number. Then, there exists a constant $C$ such that for any
$T\in \mathcal{T}_h$ and edge/face $e\in\partial T$, we have
\begin{eqnarray}\label{Trace inequality00}
\|\theta\|^p_{L^p(e)}\leq
Ch_T^{-1}\big(\|\theta\|^p_{L^p(T)}+h^p_T\|\nabla\theta\|^p_{L^p(T)}\big),
\end{eqnarray}
for any  $\theta\in W^{1,p}(T)$. 
\end{lemma}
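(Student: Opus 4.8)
The plan is to prove the estimate by a scaled Green (divergence) identity, the standard route for shape‑regular polytopal meshes. First I would use Assumptions A1--A3 of \cite{WY14}: together they guarantee that each $T\in\mathcal{T}_h$ is star‑shaped with respect to a ball $B_T\subset T$ whose radius is at least $\rho_0 h_T$ for a fixed $\rho_0>0$. Let $x_0$ be the center of $B_T$. Since $B_T\subset T$, the hyperplane carrying any face/edge $e\subset\partial T$ does not meet $B_T$, so the distance from $x_0$ to that hyperplane is $\ge\rho_0 h_T$; hence $(x-x_0)\cdot\bn\ge\rho_0 h_T$ for all $x\in e$, where $\bn$ is the unit outward normal on $e$. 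Star‑shapedness of $T$ about $x_0$ also gives $(x-x_0)\cdot\bn\ge 0$ on all of $\partial T$, and $|x-x_0|\le h_T$ on $T$.

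Next I would apply the divergence theorem to $\mathbf{F}=|\theta|^p(x-x_0)$. For $\theta\in W^{1,p}(T)$ with $p>1$ one has $|\theta|^p\in W^{1,1}(T)$ with $\nabla|\theta|^p=p|\theta|^{p-2}\theta\,\nabla\theta$ a.e., and $|\theta|^{p-1}|\nabla\theta|\in L^1(T)$ by H\"older's inequality, so $\mathbf{F}\in W^{1,1}(T)^d$ and Green's formula is justified (by density of smooth functions in $W^{1,p}(T)$ if one prefers the safe route). Using $\nabla\cdot(x-x_0)=d$ this gives
\begin{eqnarray*}
\int_{\partial T}|\theta|^p(x-x_0)\cdot\bn\,ds=\int_T\Big(\nabla|\theta|^p\cdot(x-x_0)+d\,|\theta|^p\Big)\,dx .
\end{eqnarray*}
Since $(x-x_0)\cdot\bn\ge 0$ on $\partial T$ and $\ge\rho_0 h_T$ on $e$, the left side is bounded below by $\rho_0 h_T\|\theta\|^p_{L^p(e)}$; bounding $\bigl|\nabla|\theta|^p\cdot(x-x_0)\bigr|\le p\,h_T|\theta|^{p-1}|\nabla\theta|$ on the right and dividing by $\rho_0 h_T$ yields, with $C=C(p,d,\rho_0)$,
\begin{eqnarray*}
\|\theta\|^p_{L^p(e)}\le C\Big(\int_T|\theta|^{p-1}|\nabla\theta|\,dx+h_T^{-1}\int_T|\theta|^p\,dx\Big).
\end{eqnarray*}

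It remains to absorb the cross term. By H\"older's inequality with exponents $p/(p-1)$ and $p$, $\int_T|\theta|^{p-1}|\nabla\theta|\,dx\le\|\theta\|_{L^p(T)}^{p-1}\|\nabla\theta\|_{L^p(T)}$, and the weighted Young inequality $ab\le\eps\,a^{p/(p-1)}+C_\eps b^p$ (with $C_\eps\sim\eps^{-(p-1)}$) applied with $a=\|\theta\|_{L^p(T)}^{p-1}$, $b=\|\nabla\theta\|_{L^p(T)}$ and $\eps=h_T^{-1}$ gives
\begin{eqnarray*}
\int_T|\theta|^{p-1}|\nabla\theta|\,dx\le C\,h_T^{-1}\|\theta\|^p_{L^p(T)}+C\,h_T^{p-1}\|\nabla\theta\|^p_{L^p(T)} .
\end{eqnarray*}
Substituting back produces exactly $\|\theta\|^p_{L^p(e)}\le C h_T^{-1}\big(\|\theta\|^p_{L^p(T)}+h_T^p\|\nabla\theta\|^p_{L^p(T)}\big)$, with $C$ independent of $h_T$ and $T$.

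I expect the only delicate point to be the first paragraph: extracting from A1--A3 a single center $x_0$ whose distance to the chosen face $e$ is bounded below by a fixed fraction of $h_T$ \emph{and} for which $(x-x_0)\cdot\bn\ge 0$ on the whole of $\partial T$, uniformly over $\mathcal{T}_h$. If the assumptions only provide a decomposition of $T$ into a bounded number of shape‑regular simplices, an equivalent route is to take the simplex $K\subset T$ having $e$ as a face, apply the classical simplicial trace inequality on $K$ (via the affine map to the reference simplex) to get $\|\theta\|^p_{L^p(e)}\le C h_K^{-1}\big(\|\theta\|^p_{L^p(K)}+h_K^p\|\nabla\theta\|^p_{L^p(K)}\big)$, and then use $h_K\simeq h_T$ together with $K\subset T$. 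The remaining steps (Green's identity, H\"older, Young) are entirely routine.
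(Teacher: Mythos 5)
The paper does not prove this lemma at all: it is quoted verbatim from \cite{WY14}, so there is no in-paper argument to compare against. Your proof is sound and is, in substance, the standard argument used in that reference; the Green-identity/H\"older/Young chain in your second and third paragraphs is exactly right and needs no changes.

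The one point to adjust is the first paragraph, and you correctly flagged it as the delicate step. Assumptions A1--A3 of \cite{WY14} do \emph{not} assert that $T$ is star-shaped with respect to a ball of radius $\gtrsim h_T$; what A3 provides is, for each edge/face $e\subset\partial T$, a pyramid $P(e,T,A_e)\subset T$ with base $e$, apex $A_e$, and height bounded below by $\sigma^* h_T$. The fix is even cleaner than your fallback via a sub-simplex: run your divergence-theorem computation on $P(e,T,A_e)$ with the vector field $|\theta|^p(x-A_e)$. Every lateral face of the pyramid lies in a hyperplane through the apex $A_e$, so $(x-A_e)\cdot\bn=0$ there and the lateral boundary contributes nothing; on the base $e$ the quantity $(x-A_e)\cdot\bn$ is constant and equal to the height, hence $\ge\sigma^* h_T$; and the resulting volume integrals over $P\subset T$ are dominated by the corresponding integrals over $T$. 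This removes any need for a single global center $x_0$ with $(x-x_0)\cdot\bn\ge0$ on all of $\partial T$ (which can fail for the non-convex polytopes admitted here), and it is precisely the route taken in \cite{WY14}. With that substitution your proof is complete and uniform over the mesh family.
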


\begin{lemma}{\rm(\cite{WY14})}\label{Inverse Inequality}~{\rm (}Inverse Inequality{\rm )} Let
$\mathcal{T}_h$ be a partition of the domain $\Omega$ into polygons
or polyhedra. Assume that $\mathcal{T}_h$ satisfies all
Assumptions A1-A4 and $p\geq 1$ be any real number. Then, there
exists a constant $C(k)$ such that
\begin{eqnarray}\label{Inverse Inequality00}
\|\nabla\varphi\|_{T,p}\leq
C(k)h^{-1}_T\|\varphi\|_{T,p},\quad\forall T\in\mathcal{T}_h
\end{eqnarray}
for any piecewise polynomial $\varphi$ of degree no more than $k$ on
$\mathcal{T}_h$.
\end{lemma}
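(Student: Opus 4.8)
Since this estimate is purely local, I would fix a single element $T\in\mathcal{T}_h$ and a piecewise polynomial $\varphi$ which on $T$ is a genuine polynomial of degree at most $k$. The elements of $\mathcal{T}_h$ are polytopes, not simplices, so one cannot map $T$ directly onto a reference cell; the strategy is to exploit the regularity Assumptions A1--A4 of \cite{WY14}, which guarantee that $T$ can be written as a union of a uniformly bounded number $N$ of shape-regular simplices $\{\tau_i\}_{i=1}^N$, each of diameter comparable to $h_T$, with $N$ and the shape-regularity constants depending only on the mesh parameters (not on $h$). Because $\varphi|_T$ is a single polynomial, $\varphi|_{\tau_i}\in P_k(\tau_i)$, and $\|\nabla\varphi\|_{L^p(T)}^p=\sum_{i=1}^N\|\nabla\varphi\|_{L^p(\tau_i)}^p$, so it suffices to prove the bound on each simplex $\tau=\tau_i$ with a constant independent of $i$ and $h$.

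On a fixed simplex $\tau$ I would use the affine change of variables $F:\hat\tau\to\tau$, $x=B\hat x+b$, where $\hat\tau$ is the reference simplex, and set $\hat\varphi=\varphi\circ F\in P_k(\hat\tau)$. On the finite-dimensional space $P_k(\hat\tau)$ the map $\hat\varphi\mapsto\|\hat\nabla\hat\varphi\|_{L^p(\hat\tau)}$ is a seminorm dominated by the norm $\hat\varphi\mapsto\|\hat\varphi\|_{L^p(\hat\tau)}$, giving $\|\hat\nabla\hat\varphi\|_{L^p(\hat\tau)}\le C(k,p)\|\hat\varphi\|_{L^p(\hat\tau)}$. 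Transferring back with $\|\psi\|_{L^p(\tau)}^p=|\det B|\,\|\psi\circ F\|_{L^p(\hat\tau)}^p$ and $\nabla_x\varphi=B^{-T}(\hat\nabla\hat\varphi)\circ F^{-1}$ yields
\begin{eqnarray*}
\|\nabla\varphi\|_{L^p(\tau)}\le \|B^{-1}\|\,|\det B|^{1/p}\,\|\hat\nabla\hat\varphi\|_{L^p(\hat\tau)}\le C(k,p)\,\|B^{-1}\|\,\|\varphi\|_{L^p(\tau)}.
\end{eqnarray*}
Shape regularity of $\tau$ gives $\|B^{-1}\|\le C\,h_\tau^{-1}$, and since $h_\tau$ is comparable to $h_T$ (Assumption A4 again), $\|B^{-1}\|\le C\,h_T^{-1}$. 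Summing over $i=1,\dots,N$ and taking $p$-th roots produces $\|\nabla\varphi\|_{T,p}\le C(k)\,h_T^{-1}\|\varphi\|_{T,p}$ with $C(k)$ depending only on $k$, $p$, $d$ and the regularity parameters.

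\textbf{Main obstacle.} The only genuinely nonstandard point is the polytopal nature of the elements: everything rests on the existence of a sub-triangulation of each $T$ into a bounded number of shape-regular simplices whose sizes are uniformly comparable to $h_T$, which is exactly what the mesh assumptions A1--A4 in \cite{WY14} provide; once they are invoked, the remaining steps are the classical scaling estimates and the equivalence of norms on the finite-dimensional polynomial space. (For finite $p$ this norm equivalence is automatic; the case $p=\infty$, if needed, is identical, using $\|\psi\|_{L^\infty(\tau)}=\|\psi\circ F\|_{L^\infty(\hat\tau)}$ and $\|\nabla\varphi\|_{L^\infty(\tau)}\le\|B^{-1}\|\,\|\hat\nabla\hat\varphi\|_{L^\infty(\hat\tau)}$.)
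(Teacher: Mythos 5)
The paper does not actually prove this lemma: it is quoted verbatim from \cite{WY14}, so the only thing to compare against is the argument given there. Your proof is the classical scaling argument and its core (map to a reference simplex, use equivalence of norms on $P_k(\hat\tau)$, scale back with $\|B^{-1}\|\le Ch_\tau^{-1}$) is correct, including the $p=\infty$ remark. However, you take a genuinely different route from \cite{WY14}, and the difference sits exactly at the one nonstandard point you identified. You reduce to simplices by sub-triangulating each polytopal element $T$ into a uniformly bounded number of uniformly shape-regular simplices of diameter comparable to $h_T$, and you assert that this decomposition is ``exactly what Assumptions A1--A4 provide.'' It is not: those assumptions give (roughly) volume/edge nondegeneracy, comparability of $h_e$ with $h_T$, a pyramid over each face covering only a fixed fraction of $T$, and a shape-regular \emph{circumscribed} simplex $S(T)\supset T$ with ${\rm diam}(S(T))\le Ch_T$. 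A full interior triangulation with uniformly controlled cardinality and shape regularity does not follow immediately from these and would itself need a proof (e.g.\ one must bound the number of faces of $T$ and control the simplices near the star center). The reference avoids this entirely by exploiting that $\varphi|_T$ is a single polynomial, hence already defined on all of $S(T)$: one applies the standard inverse inequality on the simplex $S(T)$ and then a ``domain inverse inequality'' $\|\varphi\|_{S(T),p}\le C\|\varphi\|_{T,p}$ for polynomials (a norm equivalence between $T$ and the slightly larger $S(T)$, which is where A1--A3 enter). That extension-outward argument is the cleaner one for polytopal elements; your extension-inward (sub-triangulation) argument is valid in principle but rests on a covering lemma you have not supplied and which the stated hypotheses do not hand you for free.

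If you want to keep your structure, the minimal fix is to replace the sub-triangulation step by the circumscribed simplex of A4: set $\tau=S(T)$, note $\varphi\in P_k(\mathbb{R}^d)$ restricts to $P_k(S(T))$, run your reference-element scaling on $S(T)$ to get $\|\nabla\varphi\|_{S(T),p}\le C(k)h_T^{-1}\|\varphi\|_{S(T),p}$, and then insert the polynomial norm equivalence $\|\varphi\|_{S(T),p}\le C\|\varphi\|_{T,p}$ (itself proved by a compactness/scaling argument using that $T$ occupies a fixed fraction of $S(T)$ by A1). Everything else in your write-up then goes through unchanged.
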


\begin{lemma}\label{projection-prop}\rm{(\cite{WY14})}
Let $\mathcal{T}_h$ be a finite element partition of $\Omega$
satisfying the shape regularity assumptions specified in \cite{WY14}
and $w\in H^{k+1}(\Omega)$. Then, for $0 \leq s\leq 1$ we have
\begin{eqnarray}\label{w estimate}
\sumT h_T^{2s} \| w-Q_0 w\|_{T,s}^2 &\leq& C h^{2(k+1)} \|w\|_{k+1}^2,
 \\\label{gradient w estimate}
\sumT h_T^{2s} \|\nabla w-\dQ_h(\nabla w)\|_{T,s}^2 &\leq& C h^{2k} \|w\|_{k+1}^2,
\end{eqnarray}
where $C$ denotes a generic constant independent of mesh size $h$ and the functions in the estimates.
\end{lemma}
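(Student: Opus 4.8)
The plan is to establish both inequalities on each element $T\in\mathcal{T}_h$ separately, using standard polynomial approximation theory, and then to sum over the partition with the help of $h_T\le h$. First I would invoke the local Bramble--Hilbert (Deny--Lions) lemma on a shape-regular polygon. Because Assumptions A1--A4 of \cite{WY14} force each $T$ to admit a shape-regular sub-triangulation and to be star-shaped with respect to a ball of radius comparable to $h_T$, a scaling argument to a reference configuration gives, for $w\in H^{k+1}(T)$ and the $L^2$-orthogonal projection $Q_0$ onto $P_k(T)$,
\begin{eqnarray*}
\|w-Q_0 w\|_{T}\le C h_T^{k+1}|w|_{k+1,T},\qquad |w-Q_0 w|_{1,T}\le C h_T^{k}|w|_{k+1,T}.
\end{eqnarray*}
Treating the endpoints $s=0$ and $s=1$ (and interpolating in between) then yields $\|w-Q_0 w\|_{T,s}\le C h_T^{k+1-s}|w|_{k+1,T}$ for $0\le s\le 1$. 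Multiplying by $h_T^{2s}$ gives $h_T^{2s}\|w-Q_0 w\|_{T,s}^2\le C h_T^{2(k+1)}|w|_{k+1,T}^2\le C h^{2(k+1)}|w|_{k+1,T}^2$, and summing over all $T$ together with $\sum_T|w|_{k+1,T}^2=|w|_{k+1}^2\le\|w\|_{k+1}^2$ establishes \eqref{w estimate}.

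For the second estimate I would apply the very same local approximation result to the vector field $\nabla w$, which lies in $[H^{k}(T)]^2$ since $w\in H^{k+1}(\Omega)$. As $\dQ_h$ is the componentwise $L^2$-orthogonal projection onto $[P_{k-1}(T)]^2$, the analogous bound is $\|\nabla w-\dQ_h(\nabla w)\|_{T,s}\le C h_T^{k-s}|\nabla w|_{k,T}\le C h_T^{k-s}|w|_{k+1,T}$ for $0\le s\le 1$. Multiplying by $h_T^{2s}$, using $h_T\le h$, and summing over $T\in\mathcal{T}_h$ produces \eqref{gradient w estimate}.

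The step I expect to be the main obstacle is making the local scaling argument rigorous on a \emph{general polygonal} element rather than a triangle: an arbitrary polygon cannot be mapped affinely onto a single fixed reference element, so the Bramble--Hilbert estimate must instead be obtained through the chunkiness parameter and star-shapedness supplied precisely by Assumptions A1--A4 of \cite{WY14} (a uniform bound on the number of edges, a lower bound on edge lengths relative to $h_T$, and the shape-regular sub-triangulation), in combination with the trace and inverse inequalities of Lemmas \ref{Trace inequality} and \ref{Inverse Inequality} to absorb the boundary and seminorm terms that arise in the scaling. Once this geometric machinery is available the two estimates follow exactly as sketched above --- which is indeed the framework developed in \cite{WY14}.
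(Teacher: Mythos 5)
Your proposal is correct: the paper itself states this lemma without proof, quoting it directly from \cite{WY14}, and your argument (local Bramble--Hilbert estimates for the $L^2$ projections on star-shaped, shape-regular polygonal elements, followed by multiplication by $h_T^{2s}$, the bound $h_T\le h$, and summation over $\mathcal{T}_h$) is exactly the standard derivation given in that reference. You also correctly identify the only genuinely delicate point, namely that the approximation estimates on general polygons must be obtained via the chunkiness/star-shapedness hypotheses of Assumptions A1--A4 rather than an affine map to a single reference element.
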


Suppose $w\in H^{1+k_1}(\Omega)$ and let $k_0=\min\{k,k_1\}$. With the tools above we can give
the estimates for $\ell(w,v)$ and $s(Q_h w,v)$ as follows.
\begin{lemma}\label{norm-equi1}
For each element $T\in\T_h$, we have
\begin{eqnarray*}
\|\nabla v_0\|_T\le C(\|\nabla_w v\|_T+h_T^{-\frac12}\|Q_b v_0-v_b\|_{\partial T}),\quad
\forall v\in V_h.
\end{eqnarray*}
Furthermore, there is
\begin{eqnarray*}
h^{-\frac12}_T\|v_0-v_b\|_{\partial T}
&\le&C(\|\nabla v_0\|_T+h^{-\frac12}_T
\|Q_b v_0-v_b\|_{\partial T}),
\\
&\le&C(\|\nabla_w v\|_T+h^{-\frac12}_T
\|Q_b v_0-v_b\|_{\partial T}).
\end{eqnarray*}
\end{lemma}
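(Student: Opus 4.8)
The plan is to establish the two inequalities separately, starting from the defining relation of the weak gradient and using the standard trace and inverse inequalities stated above. For the first estimate, I would begin by applying the identity \eqref{def-wgradient} with the choice $\bq=\nabla v_0|_T\in[P_{k-1}(T)]^2$ (which is legal since $v_0\in P_k(T)$, so $\nabla v_0$ has degree at most $k-1$). This gives
\begin{eqnarray*}
(\nabla_w v,\nabla v_0)_T=-(v_0,\nabla\cdot\nabla v_0)_T+\langle v_b,\nabla v_0\cdot\bn\rangle_{\partial T}.
\end{eqnarray*}
Then I would integrate by parts on the term $(v_0,\nabla\cdot\nabla v_0)_T$ to produce $(\nabla v_0,\nabla v_0)_T-\langle v_0,\nabla v_0\cdot\bn\rangle_{\partial T}$, exactly as done in the verification that $\trb{\cdot}$ is a norm earlier in the paper. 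Combining, we obtain $\|\nabla v_0\|_T^2=(\nabla_w v,\nabla v_0)_T+\langle v_0-v_b,\nabla v_0\cdot\bn\rangle_{\partial T}$. Note $v_0-v_b$ can be replaced by $Q_b v_0 - v_b$ against $\nabla v_0\cdot\bn$ since $\nabla v_0\cdot\bn|_e\in P_{k-1}(e)$ and $Q_b$ is the $L^2$-projection onto $P_{k-1}(e)$.

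The next step is to bound the right-hand side. The first term is controlled by Cauchy--Schwarz as $\|\nabla_w v\|_T\|\nabla v_0\|_T$. For the boundary term I would apply Cauchy--Schwarz on $\partial T$ to get $\|Q_b v_0-v_b\|_{\partial T}\,\|\nabla v_0\cdot\bn\|_{\partial T}$, then invoke the trace inequality (Lemma \ref{Trace inequality} with $p=2$) followed by the inverse inequality (Lemma \ref{Inverse Inequality}) on the polynomial $\nabla v_0$ to conclude $\|\nabla v_0\cdot\bn\|_{\partial T}\le\|\nabla v_0\|_{\partial T}\le C h_T^{-1/2}\|\nabla v_0\|_T$. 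Substituting and dividing through by $\|\nabla v_0\|_T$ (the degenerate case $\nabla v_0=0$ being trivial) yields the first claimed bound. This part is essentially routine; the one point requiring care is the legitimacy of testing with $\bq=\nabla v_0$, which relies on the degree count $k-1\ge k-1$.

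For the second estimate, I would start from the trace inequality applied to $v_0-v_b$ on $\partial T$; but since $v_b$ lives only on the skeleton, the cleaner route is to write $v_0-v_b=(v_0-Q_b v_0)+(Q_b v_0-v_b)$ and treat the two pieces separately. The term $h_T^{-1/2}\|Q_b v_0-v_b\|_{\partial T}$ is already one of the quantities on the right-hand side, so nothing is needed there. For $h_T^{-1/2}\|v_0-Q_b v_0\|_{\partial T}$, I would use that $Q_b$ is the $L^2$-projection on each edge, so $\|v_0-Q_b v_0\|_{\partial T}\le\|v_0-\bar v_0\|_{\partial T}$ for any edgewise-constant comparison (or more simply apply the trace inequality to $v_0-c$ with $c$ the edge average and use a Poincaré-type bound), obtaining $h_T^{-1/2}\|v_0-Q_b v_0\|_{\partial T}\le C\|\nabla v_0\|_T$ by a standard scaling/approximation argument on the shape-regular element $T$. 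Chaining with the first inequality of the lemma then gives the second line with $\|\nabla_w v\|_T$ on the right. The main obstacle I anticipate is precisely this last scaling estimate $h_T^{-1/2}\|v_0-Q_b v_0\|_{\partial T}\le C\|\nabla v_0\|_T$: it must be justified uniformly over all polygonal elements satisfying the assumptions of \cite{WY14}, which is where the shape-regularity hypotheses and a reference-element argument (or a direct use of the trace inequality combined with a Bramble--Hilbert bound on $\|v_0-Q_b v_0\|_T$) are genuinely needed rather than just bookkeeping.
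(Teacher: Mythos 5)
Your proposal is correct and follows essentially the same route as the paper: test the weak-gradient definition with $\bq=\nabla v_0$, integrate by parts, use that $Q_b$ may be inserted against $\nabla v_0\cdot\bn\in P_{k-1}(e)$, and then apply Cauchy--Schwarz with the trace and inverse inequalities to get the first bound; the second bound follows from the triangle inequality with the splitting $v_0-v_b=(v_0-Q_bv_0)+(Q_bv_0-v_b)$ and a Poincar\'e-type estimate $h_T^{-1/2}\|v_0-Q_bv_0\|_{\partial T}\le C\|\nabla v_0\|_T$, exactly as in the paper. The scaling step you flag as the main obstacle is indeed the one the paper dispatches by simply citing the trace and Poincar\'e inequalities.
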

\begin{proof}
From the trace inequality (\ref{Trace inequality00}) and the definition of the
weak gradient operator (\ref{def-wgradient}), we have following inequalities for any $v\in V_h$,
\begin{eqnarray*}
(\nabla v_0,\nabla v_0)_T &=& (\nabla v_0,\nabla_w v)_T+
\langle Q_b v_0-v_b,\nabla v_0\cdot\bn\rangle_{\partial T},
\\
&\le& \|\nabla v_0\|_T\|\nabla_w v\|_T+h_T^{-\frac12}\|Q_b v_0-v_b\|_{\partial T}
h_T^\frac12\|\nabla v_0\|_{\partial T}
\\
&\le& \|\nabla v_0\|_T\|\nabla_w v\|_T+\|\nabla v_0\|_T
h_T^{-\frac12}\|Q_b v_0-v_b\|_{\partial T},
\end{eqnarray*}
which implies that
\begin{eqnarray*}
\|\nabla v_0\|_T\le C(\|\nabla_w v\|_T+h_T^{-\frac12}\|Q_b v_0-v_b\|_{\partial T}).
\end{eqnarray*}
Applying the Poincar\'{e} inequality, we can obtain
\begin{eqnarray*}
h^{-\frac12}_T\|v_0-v_b\|_{\partial T}
&\le& h^{-\frac12}_T\|v_0-Q_b v_0\|_{\partial T}+h^{-\frac12}_T\|Q_b v_0-v_b\|_{\partial T}
\\
&\le& C(\|\nabla v_0\|_T+h^{-\frac12}_T\|Q_b v_0-v_b\|_{\partial T})
\\
&\le& C(\|\nabla_w v\|_T+h^{-\frac12}_T\|Q_b v_0-v_b\|_{\partial T}),
\end{eqnarray*}
which completes the proof.
\end{proof}
\begin{lemma}\label{remainder}
For any $v\in V_h$ and $w\in H^{1+k_1}(\Omega)$,  the following
estimates hold true,
\begin{eqnarray*}
|s(Q_h w,v)|&\le& Ch^{k_0+\frac\eps 2}\|w\|_{k_0+1}\trb v,
\\
|\ell(w,v)|&\le& Ch^{k_0-\frac\eps 2}\|w\|_{k_0+1}\trb v,
\end{eqnarray*}
where $k_0=\min\{k,k_1\}$.
\end{lemma}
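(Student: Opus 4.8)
The plan is to estimate each of the two terms separately, using the trace inequality (Lemma \ref{Trace inequality}), the approximation properties of the projections (Lemma \ref{projection-prop}), and the norm-equivalence (Lemma \ref{norm-equi1}), together with the observation that $\trb{v}^2 = \|\nabla_w v\|^2 + s(v,v) \ge \|\nabla_w v\|^2 + \sum_T h_T^{-1+\eps}\|Q_b v_0 - v_b\|^2_{\partial T}$, so that both $\|\nabla_w v\|$ and $\big(\sum_T h_T^{-1+\eps}\|Q_b v_0-v_b\|^2_{\partial T}\big)^{1/2}$ are bounded by $\trb{v}$.

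For the stabilization term $s(Q_h w, v)$, I would first note that $Q_b (Q_0 w) - Q_b w = Q_b(Q_0 w - w)$, so that by Cauchy--Schwarz on the edge inner product, $|s(Q_h w, v)| \le \big(\sum_T h_T^{-1+\eps}\|Q_b(Q_0 w - w)\|^2_{\partial T}\big)^{1/2} \big(\sum_T h_T^{-1+\eps}\|Q_b v_0 - v_b\|^2_{\partial T}\big)^{1/2}$. The second factor is $\le s(v,v)^{1/2} \le \trb{v}$. For the first factor, I would drop the projection $Q_b$ (it is $L^2$-stable on edges), apply the trace inequality (\ref{Trace inequality00}) with $p=2$ and $\theta = Q_0 w - w$, giving $\|Q_0 w - w\|^2_{\partial T} \le C h_T^{-1}(\|Q_0 w - w\|^2_T + h_T^2 \|\nabla(Q_0 w - w)\|^2_T)$, and then invoke (\ref{w estimate}) with $s=0$ and $s=1$ to bound $\sum_T (\|Q_0 w - w\|^2_T + h_T^2\|\nabla(Q_0 w - w)\|^2_T)$ by $C h^{2(k_0+1)}\|w\|^2_{k_0+1}$ (using $k_0 = \min\{k,k_1\}$ since $w \in H^{1+k_1}$ but $Q_0$ only reproduces polynomials of degree $k$). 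Collecting the powers of $h$: $h_T^{-1+\eps} \cdot h_T^{-1} \cdot h_T^{2(k_0+1)} = h_T^{2k_0 + \eps}$, so the first factor is $O(h^{k_0 + \eps/2}\|w\|_{k_0+1})$, which yields the claimed bound.

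For the term $\ell(w,v) = \sum_T \langle (\nabla w - \dQ_h \nabla w)\cdot\bn,\, v_0 - v_b\rangle_{\partial T}$, I would apply Cauchy--Schwarz edgewise and insert the weights $h_T^{\pm 1/2}$: $|\ell(w,v)| \le \sum_T h_T^{1/2}\|(\nabla w - \dQ_h\nabla w)\cdot\bn\|_{\partial T} \cdot h_T^{-1/2}\|v_0 - v_b\|_{\partial T}$. The factor $h_T^{-1/2}\|v_0 - v_b\|_{\partial T}$ is handled by the second estimate in Lemma \ref{norm-equi1}, which bounds it by $C(\|\nabla_w v\|_T + h_T^{-1/2}\|Q_b v_0 - v_b\|_{\partial T})$; after summing over $T$ and applying discrete Cauchy--Schwarz this contributes $\le C\big(\|\nabla_w v\|^2 + \sum_T h_T^{-1}\|Q_b v_0-v_b\|^2_{\partial T}\big)^{1/2} = C\trb{v}_1$. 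The subtlety is that $\trb{v}_1$ involves the weight $h_T^{-1}$ while $\trb{v}$ only has $h_T^{-1+\eps}$; since $h_T \le h \le $ const, we have $h_T^{-1} = h_T^{-1+\eps}\cdot h_T^{-\eps} \le C h^{-\eps} h_T^{-1+\eps}$, so $\trb{v}_1 \le C h^{-\eps/2}\trb{v}$ (this is where the factor $h^{-\eps/2}$ in the bound for $\ell$ comes from). For the remaining factor $h_T^{1/2}\|(\nabla w - \dQ_h\nabla w)\cdot\bn\|_{\partial T} \le h_T^{1/2}\|\nabla w - \dQ_h\nabla w\|_{\partial T}$, I would again use the trace inequality to get $\|\nabla w - \dQ_h\nabla w\|^2_{\partial T} \le C h_T^{-1}(\|\nabla w - \dQ_h\nabla w\|^2_T + h_T^2\|\nabla(\nabla w - \dQ_h\nabla w)\|^2_T)$ and then (\ref{gradient w estimate}) with $s=0,1$, giving $\sum_T h_T \cdot h_T^{-1}(\cdots) \le C h^{2k_0}\|w\|^2_{k_0+1}$, i.e. this factor is $O(h^{k_0}\|w\|_{k_0+1})$. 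Multiplying the two contributions gives $|\ell(w,v)| \le C h^{k_0}\|w\|_{k_0+1}\cdot h^{-\eps/2}\trb{v} = Ch^{k_0-\eps/2}\|w\|_{k_0+1}\trb{v}$.

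The main obstacle, and the only place requiring genuine care rather than routine bookkeeping, is tracking the exponents of $h_T$ through the weight juggling — in particular reconciling the $\eps$-modified stabilization weight $h_T^{-1+\eps}$ with the standard $h_T^{-1}$ weight in $\trb{v}_1$ and in the trace inequality, and making sure the local powers of $h_T$ can be safely replaced by the global $h$ when pulling constants out of the sum (this requires the quasi-uniformity implicit in the regularity assumptions, or at worst only uses $h_T \le h$). The minimum $k_0 = \min\{k,k_1\}$ appears because the projection estimates in Lemma \ref{projection-prop} as stated need $w \in H^{k+1}$, but when $w$ only lies in $H^{1+k_1}$ with $k_1 < k$ one uses the estimates at the reduced regularity level; I would note this reduction explicitly but it introduces no new difficulty.
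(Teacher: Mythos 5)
Your argument follows the paper's proof almost verbatim for the stabilization term, and for $\ell(w,v)$ it is the same Cauchy--Schwarz / trace-inequality / projection-estimate computation with one cosmetic difference: you split the edge terms with the unweighted factors $h_T^{1/2}$ and $h_T^{-1/2}$, collect the $v$-part into $\trb{v}_1$, and then convert via $\trb{v}_1\le Ch^{-\eps/2}\trb{v}$, whereas the paper splits directly with $h_T^{(1-\eps)/2}$ and $h_T^{(-1+\eps)/2}$, so that the $v$-factor is bounded by $\trb{v}$ outright (using Lemma \ref{norm-equi1} and $h_T^{\eps/2}\le C$) and the entire loss $h^{-\eps/2}$ sits on the approximation factor $\bigl(\sumT h_T^{1-\eps}\|\nabla w-\dQ_h\nabla w\|_{\partial T}^2\bigr)^{1/2}$, where it is absorbed as $h_T^{2k_0-\eps}\le h^{2k_0-\eps}$ since $2k_0-\eps>0$.

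The one point to correct is your parenthetical claim that the step $\trb{v}_1\le Ch^{-\eps/2}\trb{v}$ ``at worst only uses $h_T\le h$.'' It does not: that step needs $h_T^{-\eps}\le Ch^{-\eps}$, i.e.\ $h_T\ge ch$, and $h_T\le h$ gives the inequality in the \emph{wrong} direction. So your version of the $\ell$-estimate genuinely requires quasi-uniformity of the mesh, while the paper's distribution of the $\eps$-weights avoids this entirely (every residual power of $h_T$ in the paper's computation is positive and is bounded by the corresponding power of $h$ using only $h_T\le h$). This is a minor blemish rather than a fatal gap --- the paper itself invokes $\trb{\cdot}_1\le Ch^{-\eps/2}\trb{\cdot}$ in the proof of Theorem \ref{err-est}, and the numerical experiments use uniform meshes --- but if you want the lemma on general shape-regular polytopal partitions you should adopt the paper's weighting of the Cauchy--Schwarz splitting rather than passing through $\trb{\cdot}_1$.
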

\begin{proof}
From the Cauchy-Schwarz inequality and Lemma \ref{norm-equi1}, we can obtain
\begin{eqnarray*}
|s(Q_h w,v)|&=&\left|\sumT h_T^{-1+\eps}\langle Q_bQ_0 w-Q_b w,Q_b v_0-v_b
\rangle_{\partial T}\right|
\\
&\le&C\left(\sumT h_T^{-1+\eps}\|Q_0 w-w\|_{\partial T}^2\right)^\frac12
\left(\sumT h_T^{-1+\eps}\|Q_b v_0-v_b\|_{\partial T}^2\right)^\frac12
\\
&\le& Ch^{k_0+\frac\eps 2}\|w\|_{k_0+1}\trb v.
\end{eqnarray*}
Similarly, for the second term we can derive that
\begin{eqnarray*}
|\ell(w,v)|&=&\left|\sumT\langle(\nabla w-\dQ_h\nabla w)\cdot\bn,
 v_0-v_b\rangle_{\partial T}\right|\\
 &\le& C\left(\sumT h_T^{1-\eps}\|\nabla w-\dQ_h\nabla w\|_{\partial T}^2\right)^\frac12
\left(\sumT h_T^{-1+\eps}\|v_0-v_b\|_{\partial T}^2\right)^\frac12\\
&\le& Ch^{k_0-\frac\eps 2}\|w\|_{k_0+1}\trb v,
\end{eqnarray*}
which completes the proof.
\end{proof}

\subsection{Error estimates}
With the error equation (\ref{error-eqn}) and the estimates derived in
Lemma \ref{remainder}, we can get the following error estimate for the
weak Galerkin method.
\begin{theorem}\label{err-est}
Assume the exact solution of (\ref{Poisson-eq1}),
$u\in H^{1+k_1}(\Omega)$, and $u_h$ is the numerical solution of the weak Galerkin
scheme (\ref{WG-scheme1}). Denote $k_0=\min\{k,k_1\}$, then the following
estimates hold true,
\begin{eqnarray}\label{err-est1}
\trb{Q_h u-u_h}\le Ch^{k_0-\frac\eps 2}\|u\|_{k_0+1},\\\label{err-est2}
\trb{Q_h u-u_h}_1\le Ch^{k_0-\eps}\|u\|_{k_0+1}.
\end{eqnarray}
\end{theorem}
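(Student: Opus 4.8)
The plan is to combine the error equation \eqref{error-eqn} with the bounds from Lemma~\ref{remainder} and then to upgrade the $\trb{\cdot}$ estimate to a $\trb{\cdot}_1$ estimate by a scaling argument. First I would take $v=e_h$ in the error equation \eqref{error-eqn}, which gives $\trb{e_h}^2=a_w(e_h,e_h)=\ell(u,e_h)+s(Q_h u,e_h)$. Applying Lemma~\ref{remainder} to each term on the right (with $w=u$, so $k_1\ge k$ and $k_0=k$; the case of lower regularity $k_1<k$ is handled identically with $k_0=\min\{k,k_1\}$) yields
\begin{eqnarray*}
\trb{e_h}^2\le Ch^{k_0-\frac\eps2}\|u\|_{k_0+1}\,\trb{e_h}+Ch^{k_0+\frac\eps2}\|u\|_{k_0+1}\,\trb{e_h},
\end{eqnarray*}
and since $0\le\eps<1$ the first exponent dominates, so dividing by $\trb{e_h}$ gives \eqref{err-est1}.

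For \eqref{err-est2}, the key observation is that $\trb v_1^2$ and $\trb v^2$ differ only in the weight carried by the stabilization term: $\trb v^2=\|\nabla_w v\|^2+\sumT h_T^{-1+\eps}\|Q_b v_0-v_b\|^2_{\partial T}$ while $\trb v_1^2=\|\nabla_w v\|^2+\sumT h_T^{-1}\|Q_b v_0-v_b\|^2_{\partial T}$. Hence $\trb v_1^2\le \max_T h_T^{-\eps}\,\trb v^2\le Ch^{-\eps}\trb v^2$ (the gradient term costs nothing, and each boundary term picks up a factor $h_T^{-\eps}\le h^{-\eps}$ since $\eps\ge0$). Combining this with \eqref{err-est1} gives $\trb{e_h}_1\le Ch^{-\frac\eps2}\trb{e_h}\le Ch^{-\frac\eps2}\cdot h^{k_0-\frac\eps2}\|u\|_{k_0+1}=Ch^{k_0-\eps}\|u\|_{k_0+1}$, which is \eqref{err-est2}.

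The step I expect to require the most care is making sure the two estimates in Lemma~\ref{remainder} are applied with the correct norm on the test function: Lemma~\ref{remainder} is stated in terms of $\trb v$, so it is essential that we take $v=e_h$ (not some other representative of the error) and that $e_h\in V_h$, which holds because $Q_h u\in V_h$ and $u_h\in V_h$. A minor subtlety is that for the $s$-term one uses $\langle Q_bQ_0u-Q_bu,\cdot\rangle$, i.e.\ $Q_b$ applied to $Q_0u-u$, which is exactly what the proof of Lemma~\ref{remainder} already accounts for via the projection estimate \eqref{w estimate}; no new work is needed. Everything else is a direct chaining of the already-established inequalities, so the proof is short.
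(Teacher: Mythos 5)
Your proposal is correct and follows essentially the same route as the paper: take $v=e_h$ in the error equation \eqref{error-eqn}, apply Lemma~\ref{remainder} to both right-hand terms, cancel one factor of $\trb{e_h}$, and then pass from $\trb{\cdot}$ to $\trb{\cdot}_1$ by reweighting the stabilization term. One small caution on that last step: since $h_T\le h$ one actually has $h_T^{-\eps}\ge h^{-\eps}$, so the bound $\trb{v}_1^2\le Ch^{-\eps}\trb{v}^2$ really requires $(\min_T h_T)^{-\eps}\le Ch^{-\eps}$, i.e.\ quasi-uniformity of the mesh --- which is the same implicit assumption the paper makes when it asserts $\trb{\cdot}_1\le Ch^{-\eps/2}\trb{\cdot}$ ``when $h$ is small.''
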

\begin{proof}
Taking $v=e_h$ in (\ref{error-eqn}) and it follows that
\begin{eqnarray*}
\trb{e_h}^2&=&\ell(u,e_h)+s(Q_h u,e_h)\\
&\le& Ch^{k_0-\frac\eps 2}\|w\|_{k_0+1}\trb {e_h}+Ch^{k_0-\frac\eps 2}
\|w\|_{k_0+1}\trb {e_h}\\
&\le& Ch^{k_0-\frac\eps 2}\|w\|_{k_0+1}\trb {e_h}.
\end{eqnarray*}
From the definition of $\trb\cdot_1$, we can easily get that
when $h$ is small,
\begin{eqnarray*}
\trb{Q_h u-u_h}_1&\le& Ch^{-\frac\eps 2}\trb{Q_h u-u_h}
\le Ch^{k_0-\eps}\|u\|_{k_0+1},
\end{eqnarray*}
which completes the proof.
\end{proof}

Using a standard dual argument, which is similar to the technique applied
in \cite{WY2}, and then we can obtain the following $L^2(\Omega)$ error
estimate.
\begin{theorem}\label{err-estL2}
Assume the exact solution of (\ref{Poisson-eq1}),
$u\in H^{1+k_1}(\Omega)$, and $u_h$ is the numerical solution of the weak Galerkin
scheme (\ref{WG-scheme1}). In addition, assume the dual problem has
$H^2(\Omega)$-regularity. Denote $k_0=\min\{k,k_1\}$, then the following
estimate holds true
\begin{eqnarray}\label{L2_Estimate_Solution}
\|Q_0 u-u_0\|\le Ch^{k_0+1-\eps}\|u\|_{k_0+1}.
\end{eqnarray}
\end{theorem}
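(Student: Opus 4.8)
The plan is to use a standard Aubin--Nitsche duality argument adapted to the weak Galerkin setting. Let $e_h = Q_h u - u_h = \{Q_0 u - u_0, Q_b u - u_b\}$ and set $\phi$ to be the solution of the dual (adjoint) problem $-\Delta\phi = Q_0 u - u_0$ in $\Omega$ with $\phi = 0$ on $\partial\Omega$. By the assumed $H^2(\Omega)$-regularity we have $\|\phi\|_2 \le C\|Q_0 u - u_0\|$. First I would apply the error equation (\ref{error-eqn}), now with $\phi$ playing the role of the exact solution and $e_h$ the test function, to express $a_w(Q_h\phi - \phi_h, e_h) = \ell(\phi, e_h) + s(Q_h\phi, e_h)$, where $\phi_h$ is the WG approximation to $\phi$. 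Symmetry of $a_w(\cdot,\cdot)$ lets me rewrite $a_w(e_h, Q_h\phi) = a_w(e_h, \phi_h) + \ell(\phi, e_h) + s(Q_h\phi, e_h)$, and I would use (\ref{error-eqn}) again on the first term with test function $\phi_h$, namely $a_w(e_h, \phi_h) = \ell(u, \phi_h) + s(Q_h u, \phi_h)$.

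Next I need to relate $a_w(e_h, Q_h\phi)$ back to $\|Q_0 u - u_0\|^2$. Running the same computation that produced the error equation but in the opposite direction, one finds that $(\nabla_w Q_h\phi, \nabla_w e_h) = (\nabla(Q_0 u - u_0)\ \text{-type term})$; more precisely, since $-\Delta\phi = Q_0 u - u_0$, testing the definition of $\nabla_w$ against $e_h$ and integrating by parts yields $a_w(Q_h\phi, e_h) = (Q_0 u - u_0, Q_0 u - u_0) + \ell(\phi, e_h) + s(Q_h\phi, e_h) = \|Q_0 u - u_0\|^2 + \ell(\phi, e_h) + s(Q_h\phi, e_h)$. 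Combining the two expressions for $a_w(e_h, Q_h\phi)$ gives
\begin{eqnarray*}
\|Q_0 u - u_0\|^2 = \ell(u, \phi_h) + s(Q_h u, \phi_h) - \ell(\phi, e_h) - s(Q_h\phi, e_h).
\end{eqnarray*}
Then I would estimate each of the four terms on the right. Writing $\phi_h = Q_h\phi - (Q_h\phi - \phi_h)$, I split $\ell(u,\phi_h)$ and $s(Q_h u, \phi_h)$ accordingly; the pieces involving $Q_h\phi - \phi_h$ are controlled by $\trb{Q_h\phi - \phi_h}$, which by Theorem \ref{err-est} applied to $\phi \in H^2(\Omega)$ (so $k_0 = 1$ for the dual) is bounded by $Ch^{1-\eps/2}\|\phi\|_2$, times the $h^{k_0-\eps/2}\|u\|_{k_0+1}$ factor coming from Lemma \ref{remainder}. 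The pieces involving $Q_h\phi$ directly, namely $\ell(u, Q_h\phi)$ and $s(Q_h u, Q_h\phi)$, and symmetrically $\ell(\phi, e_h)$ and $s(Q_h\phi, e_h)$, are handled by a sharper version of the Lemma \ref{remainder} estimates: bounding $\langle(\nabla u - \dQ_h\nabla u)\cdot\bn, (Q_h\phi)_0 - (Q_h\phi)_b\rangle$ by using the approximation property (\ref{w estimate})--(\ref{gradient w estimate}) on \emph{both} factors gains an extra power of $h$, giving $Ch^{k_0+1-\eps}\|u\|_{k_0+1}\|\phi\|_2$. Dividing through by $\|Q_0 u - u_0\| = \|\phi\|_2$-comparable quantity finishes the estimate.

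The main obstacle I anticipate is the careful bookkeeping of the $\eps$-powers and making sure the duality genuinely gains a full order $h$ rather than only $h^{1-\eps}$. The delicate terms are the cross terms $\ell(u, Q_h\phi)$ and $s(Q_h u, Q_h\phi)$: for these one cannot simply quote Lemma \ref{remainder} (which pairs $\ell(w,v)$ with $\trb{v}$ for a generic $v\in V_h$), because $\trb{Q_h\phi}$ does not go to zero. Instead one must exploit that $(Q_h\phi)_0 - (Q_h\phi)_b = Q_0\phi - Q_b\phi$ and estimate $h_T^{-1/2}\|Q_0\phi - Q_b\phi\|_{\partial T}$ directly via the trace inequality (\ref{Trace inequality00}) and (\ref{w estimate}), obtaining an $O(h)$ bound (since $Q_0\phi - \phi$ and $\phi - Q_b\phi$ are each $O(h)$ in the relevant norm when $\phi\in H^2$). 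Pairing that with the $O(h^{k_0})$ bound on $\nabla u - \dQ_h\nabla u$ along $\partial T$ from (\ref{gradient w estimate}) produces the claimed $h^{k_0+1-\eps}$. The $s$-term is analogous, with the extra $h_T^{\eps}$ weight absorbed into the final exponent. Once these sharp pairings are in place, collecting all terms and using $H^2$-regularity to cancel one factor of $\|Q_0 u - u_0\|$ yields (\ref{L2_Estimate_Solution}).
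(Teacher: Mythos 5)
Your proposal is correct and is exactly the standard Aubin--Nitsche duality argument that the paper invokes (but does not write out, deferring to \cite{WY2}); in particular you correctly isolate the one genuinely delicate point, namely that $\ell(u,Q_h\phi)$ and $s(Q_hu,Q_h\phi)$ cannot be bounded via Lemma \ref{remainder} (since $\trb{Q_h\phi}$ does not vanish) and must instead be estimated directly through $\|Q_0\phi-Q_b\phi\|_{\partial T}\le 2\|Q_0\phi-\phi\|_{\partial T}\le Ch_T^{3/2}\|\phi\|_{2,T}$. One harmless bookkeeping slip: equating your two expressions for $a_w(e_h,Q_h\phi)$ makes the terms $\ell(\phi,e_h)+s(Q_h\phi,e_h)$ cancel, so the identity is simply $\|Q_0u-u_0\|^2=\ell(u,\phi_h)+s(Q_hu,\phi_h)$ (equivalently, test the discrete dual problem with $e_h$); the extra terms you retain are themselves $O(h^{k_0+1-\eps}\|u\|_{k_0+1}\|\phi\|_2)$, so the final estimate is unaffected.
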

\section{Error estimates for the eigenvalue problem}
In this section, we turn back to the approximation of the eigenvalue
problem (\ref{problem-eq}). Denote $V_0=H_0^1(\Omega)$,
and define the sum space $V=V_0+V_h$. Now we introduce the following
semi-norm on $V$ that
\begin{eqnarray*}
\|w\|_V^2= \sumT \Big(\|\nabla w_0\|_T^2+ h_T^{-1}\|Q_b w_0-w_b\|^2_{\partial
T}\Big).
\end{eqnarray*}
We claim that $\|\cdot\|_V$ indeed defines a norm on $V$. For any
$w\in V_0$, if $w_b$ is defined in the sense of trace, we shall show that
$\|w\|_V$ is equivalent to $|w|_1$, which defines a norm on $V_0$.
\begin{lemma}\label{norm-equi3}
$\|\cdot\|_V$ is equivalent to $|\cdot|_1$ on $V_0$.
\end{lemma}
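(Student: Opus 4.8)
The plan is to establish the two-sided bound $c\,|w|_1\le\|w\|_V\le C\,|w|_1$ for every $w\in V_0$. Throughout, $w\in V_0=H_0^1(\Omega)$ is identified with the weak function $(w_0,w_b)$, where $w_0=w|_T$ on each $T\in\T_h$ and $w_b$ is the trace of $w$ on the edges; note that then the trace of $w_0$ from each side of an interior edge agrees and equals $w_b$. Recall also that $|w|_1=\|\nabla w\|$ is a genuine norm on $V_0$ by the Poincar\'e--Friedrichs inequality, and that on $V_0$ the $V$-seminorm reads $\|w\|_V^2=\sumT\big(\|\nabla w_0\|_T^2+h_T^{-1}\|Q_bw_0-w_b\|_{\partial T}^2\big)$.

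First, the lower bound is immediate: the edge contribution $h_T^{-1}\|Q_bw_0-w_b\|_{\partial T}^2$ is nonnegative, so $|w|_1^2=\sumT\|\nabla w_0\|_T^2\le\|w\|_V^2$. Hence only the reverse inequality requires work, and for that it suffices to bound $\sumT h_T^{-1}\|Q_bw_0-w_b\|_{\partial T}^2$ by $C\,|w|_1^2$.

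For the reverse bound I would argue element by element. Fix $T\in\T_h$; on each edge $e\subset\partial T$ the trace of $w_0$ equals $w_b$, so $Q_bw_0-w_b=-(I-Q_b)w_b$ on $e$. Since $k\ge1$, the space $P_{k-1}(e)$ contains the constants, so $Q_b$ reproduces constants and $(I-Q_b)$ annihilates them; thus for any constant $c$ and each $e$, $\|(I-Q_b)w_b\|_e=\|(I-Q_b)(w-c)\|_e\le\|w-c\|_e$. Choosing $c=|T|^{-1}\int_T w$ the mean value of $w$ over $T$, summing over the (uniformly bounded number of) edges of $T$, and applying the trace inequality of Lemma \ref{Trace inequality} with $p=2$ followed by the elementwise Poincar\'e inequality $\|w-c\|_T\le Ch_T\|\nabla w\|_T$, I get
\[
h_T^{-1}\|Q_bw_0-w_b\|_{\partial T}^2\ \le\ h_T^{-1}\|w-c\|_{\partial T}^2\ \le\ Ch_T^{-2}\big(\|w-c\|_T^2+h_T^2\|\nabla w\|_T^2\big)\ \le\ C\|\nabla w_0\|_T^2 .
\]
Summing over $T\in\T_h$ yields $\sumT h_T^{-1}\|Q_bw_0-w_b\|_{\partial T}^2\le C\,|w|_1^2$, hence $\|w\|_V^2\le(1+C)|w|_1^2$. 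Together with the trivial lower bound this gives the equivalence on $V_0$, and in particular shows $\|\cdot\|_V$ is a norm (not merely a seminorm) there.

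I expect no serious obstacle. The one point needing care — and the only place where the mesh regularity assumptions from \cite{WY14} are genuinely used — is that the Poincar\'e constant on each element $T$ must scale exactly like $h_T$, \emph{uniformly} over the mesh family: for simplices this is the standard scaling argument, while for the general polygonal elements allowed here it follows from each $T$ being star-shaped with respect to a disk of radius $\simeq h_T$ (which also bounds the number of edges per element, used in the summation above). Everything else — reproduction of constants by $Q_b$ and the trace inequality — is already available from the lemmas above.
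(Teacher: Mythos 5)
Your proposal is correct and follows essentially the same route as the paper: both reduce the edge term via the best-approximation/contraction property of $Q_b$ (comparing with the elementwise mean of $w$, which is exactly the paper's $Q_c w$), then apply the trace inequality of Lemma \ref{Trace inequality} and the scaled elementwise Poincar\'e inequality $\|w-Q_cw\|_T\le Ch_T\|\nabla w\|_T$. Your remark about the uniformity of the Poincar\'e constant under the shape-regularity assumptions of \cite{WY14} is a fair point that the paper leaves implicit.
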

\begin{proof}
It is obvious that for any $w\in V_0$,
\begin{eqnarray*}
|w|_1 \le \|w\|_V.
\end{eqnarray*}
Then we only need to show that
\begin{eqnarray*}
\sumT h_T^{-1}\|Q_b w-w\|_{\partial T}^2 \le C\|\nabla w\|^2.
\end{eqnarray*}
To this end, denote by $Q_c$ the $L^2$ projection onto $P_0(T)$,
and it follows the trace inequality (\ref{Trace inequality00}) and
the Poincare's inequality that
\begin{eqnarray*}
\sumT h_T^{-1}\|Q_b w-w\|_{\partial T}^2
&\le& \sumT h_T^{-1}\|w-Q_c w\|_{\partial T}^2
\\
&\le& C\sumT \left(h_T^{-2}\|w-Q_c w\|_{T}^2 +
\|\nabla(w-Q_c w)\|_{T}^2\right)
\\
&\le& C\|\nabla w\|^2,
\end{eqnarray*}
which completes the proof.
\end{proof}

 As to the space $V_h$, we have the following equivalence
lemma.
\begin{lemma}\label{norm-equi}\rm{(\cite{WY2})}
There exists two constants $C_1$ and $C_2$ such that for any $w\in V_h$,
we have
\begin{eqnarray}\label{norm-equi2}
C_1\|w\|_V\le \trb w_1 \le C_2\|w\|_V,
\end{eqnarray}
i.e. $\|\cdot\|_V$ and $\trb\cdot_1$ are equivalent on $V_h$.
\end{lemma}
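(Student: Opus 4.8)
The plan is to establish the equivalence $C_1\|w\|_V \le \trb{w}_1 \le C_2\|w\|_V$ on $V_h$ by comparing both quantities term-by-term, since
\[
\trb{w}_1^2 = \|\nabla_w w\|^2 + \sumT h_T^{-1}\|Q_b w_0 - w_b\|_{\partial T}^2,
\qquad
\|w\|_V^2 = \sumT\bigl(\|\nabla w_0\|_T^2 + h_T^{-1}\|Q_b w_0 - w_b\|_{\partial T}^2\bigr).
\]
The two expressions share the boundary stabilization term $\sumT h_T^{-1}\|Q_b w_0 - w_b\|_{\partial T}^2$ verbatim, so the entire problem reduces to comparing $\|\nabla_w w\|_T$ with $\|\nabla w_0\|_T$ on each element, modulo that common term.

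For the upper bound $\trb{w}_1 \le C_2\|w\|_V$, I would show $\|\nabla_w w\|_T \le C(\|\nabla w_0\|_T + h_T^{-1/2}\|Q_b w_0 - w_b\|_{\partial T})$. This follows by testing the definition \eqref{def-wgradient} with $\bq = \nabla_w w \in [P_{k-1}(T)]^2$, integrating by parts to write
\[
(\nabla_w w, \nabla_w w)_T = (\nabla w_0, \nabla_w w)_T + \langle Q_b w_0 - w_b, \nabla_w w\cdot\bn\rangle_{\partial T},
\]
then applying Cauchy-Schwarz, the trace inequality \eqref{Trace inequality00} (to bound $\|\nabla_w w\|_{\partial T}$ by $h_T^{-1/2}\|\nabla_w w\|_T$, using that $\nabla_w w$ is a polynomial so the gradient term is absorbed via the inverse inequality \eqref{Inverse Inequality00}), and dividing by $\|\nabla_w w\|_T$. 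Summing over $T$ and adding the common stabilization term gives $\trb{w}_1 \le C_2\|w\|_V$.

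For the lower bound $C_1\|w\|_V \le \trb{w}_1$, I need the reverse control $\|\nabla w_0\|_T \le C(\|\nabla_w w\|_T + h_T^{-1/2}\|Q_b w_0 - w_b\|_{\partial T})$ — but this is precisely the first inequality proved in Lemma~\ref{norm-equi1}. Squaring, summing over $T\in\T_h$, and combining with the common stabilization term yields $\|w\|_V^2 \le C(\|\nabla_w w\|^2 + \sumT h_T^{-1}\|Q_b w_0 - w_b\|_{\partial T}^2) = C\,\trb{w}_1^2$, which is the desired bound with $C_1 = C^{-1/2}$.

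The only subtlety — and the one step that needs care rather than being purely routine — is the upper-bound direction, where one must control the \emph{trace} $\|\nabla_w w\cdot\bn\|_{\partial T}$ of the discrete weak gradient. Since $\nabla_w w|_T \in [P_{k-1}(T)]^2$ is a polynomial, the trace inequality \eqref{Trace inequality00} combined with the inverse inequality \eqref{Inverse Inequality00} gives $h_T^{1/2}\|\nabla_w w\|_{\partial T} \le C\|\nabla_w w\|_T$ with a constant depending only on $k$ and the shape-regularity of $\T_h$; this is what makes the boundary term $\langle Q_b w_0 - w_b, \nabla_w w\cdot\bn\rangle_{\partial T}$ absorbable into $\|\nabla_w w\|_T \cdot h_T^{-1/2}\|Q_b w_0 - w_b\|_{\partial T}$. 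Everything else is Cauchy-Schwarz and summation. Since this lemma is attributed to \cite{WY2}, I would present the argument concisely along these lines.
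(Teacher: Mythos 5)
Your proof is correct and follows essentially the same route as the paper: both directions reduce to the element-wise inequalities $\|\nabla w_0\|_T \le C(\|\nabla_w w\|_T + h_T^{-1/2}\|Q_b w_0-w_b\|_{\partial T})$ (from Lemma~\ref{norm-equi1}) and its converse, the latter obtained by testing \eqref{def-wgradient} with $\bq=\nabla_w w$ and absorbing the boundary term via the trace and inverse inequalities. The only cosmetic difference is that you insert $Q_b$ into the boundary pairing directly by the projection property, whereas the paper keeps $w_0-w_b$ and then invokes the second estimate of Lemma~\ref{norm-equi1}; both are equally valid.
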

\begin{proof}
In order to prove the equivalence, we just need to verify the following
inequalities that for any $w\in V_h$,
\begin{eqnarray}
&&\|\nabla w_0\|_T
\le C(\|\nabla_w w\|_T+h_T^{-\frac12}\|Q_b w_0-w_b\|_{\partial T}),
\label{norm-neq1}\\
&&\|\nabla_w w\|_T
\le C(\|\nabla w_0\|_T+h_T^{-\frac12}\|Q_b w_0-w_b\|_{\partial T}).\label{norm-neq2}
\end{eqnarray}
The inequality (\ref{norm-neq1}) has been proved in Lemma \ref{norm-equi1}.
For handling the inequality (\ref{norm-neq2}), we use the definition
of the weak gradient to get that
\begin{eqnarray*}
(\nabla_w w,\nabla_w w)_T&=&(\nabla_w w,\nabla w_0)-\langle
w_0-w_b,\nabla_w w\cdot\bn\rangle_{\partial T}
\\
&\le&C(\|\nabla_w w\|_T\|\nabla w_0\|_T-\|\nabla_w w\|_Th^{-\frac12}_T
\|w_0-w_b\|_{\partial T}).
\end{eqnarray*}
Then we can derive from Lemma \ref{norm-equi1} that
\begin{eqnarray*}
\|\nabla_w w\|_T&\le& C(\|\nabla w_0\|_T+h_T^{-\frac12}\|w_0-w_b\|_{\partial T})
\\
&\le&C(\|\nabla w_0\|_T+h_T^{-\frac12}\|Q_b w_0-w_b\|_{\partial T}),
\end{eqnarray*}
which completes the proof.
\end{proof}

Now we define two operators $K$ and $K_h$ as follows
\begin{eqnarray*}
&&K:\ L^2(\Omega)\rightarrow V_0 \text{ satisfying } a(K f,v)=(f,v),\quad\forall v\in V_0,
\\
&&K_h:L^2(\Omega)\rightarrow V_h \text{ satisfying } a_w(K_h f,v)=(f,v_0),\quad\forall v\in V_h.
\end{eqnarray*}

The following lemmas show that the finite element
space $V_h$ and the discrete solution operator $K_h$
are approximations of $V$ and $K$.
\begin{lemma}\label{interpolation}
Suppose $w\in V_0\cap H^{1+k_1}(\Omega)$, then we have
\begin{eqnarray*}
\|w-Q_h w\|_V &\le& C h^{k_1}\|w\|_{k_1+1}.
\end{eqnarray*}
\end{lemma}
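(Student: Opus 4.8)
The plan is to expand $\|w-Q_hw\|_V^2$ by the definition of $\|\cdot\|_V$ and to estimate the two resulting contributions separately, using the approximation properties of the projections $Q_0$, $Q_b$ (Lemma~\ref{projection-prop}) together with the trace inequality \eqref{Trace inequality00}. Identifying $w\in V_0$ with the pair whose interior component is $w$ and whose boundary component is the trace of $w$ on $\E_h$, one has $w-Q_hw=\{w-Q_0w,\,w-Q_bw\}$, so that
\begin{equation*}
\|w-Q_hw\|_V^2=\sumT\|\nabla(w-Q_0w)\|_T^2+\sumT h_T^{-1}\big\|Q_b(w-Q_0w)-(w-Q_bw)\big\|_{\partial T}^2 .
\end{equation*}
The first sum is immediate: the standard element-wise approximation bound for the $L^2$-projection $Q_0$ (the $s=1$ case of Lemma~\ref{projection-prop}) gives $\|\nabla(w-Q_0w)\|_T\le Ch_T^{k_1}\|w\|_{k_1+1,T}$, and hence $\sumT\|\nabla(w-Q_0w)\|_T^2\le Ch^{2k_1}\|w\|_{k_1+1}^2$.

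For the boundary sum I would first rewrite the integrand. Since on $\partial T$ the trace of the interior component $w$ equals the boundary component, linearity of $Q_b$ yields
\begin{equation*}
Q_b(w-Q_0w)-(w-Q_bw)=Q_b\big((w-Q_0w)|_{\partial T}\big)+(Q_bw-w)|_{\partial T},
\end{equation*}
and I would bound the two terms by the triangle inequality. For $Q_b\big((w-Q_0w)|_{\partial T}\big)$ I would use the $L^2(e)$-stability $\|Q_b\varphi\|_e\le\|\varphi\|_e$, then the trace inequality \eqref{Trace inequality00} with $p=2$ and the projection estimate \eqref{w estimate}; this contributes $Ch^{2k_1}\|w\|_{k_1+1}^2$ to $\sumT h_T^{-1}(\cdot)^2$. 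For $(Q_bw-w)|_{\partial T}$ I would exploit that $Q_b$ reproduces $P_{k-1}(e)$: writing $Q_bw-w=Q_b(w-q)-(w-q)$ with $q$ the local $L^2$-projection of $w$ onto $P_{k-1}(T)$ (whose edge restrictions lie in $P_{k-1}(e)$), and then combining stability of $Q_b$, the trace inequality and polynomial approximation on $T$ — this is exactly the device already used in the proof of Lemma~\ref{norm-equi3}, now with a higher-degree comparison polynomial. Collecting the three bounds yields the asserted estimate.

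The step I expect to be the main obstacle is the boundary sum, and within it the term carrying the weight $h_T^{-1}$ on $\partial T$: this negative power of $h_T$ has to be absorbed, which is what forces the simultaneous use of the trace inequality \eqref{Trace inequality00} (trading $h_T^{-1}$ on the edge for $h_T^{-2}$ on the element) and of a comparison polynomial $q$ of degree $k-1$, so that the element-wise quantities $\|w-q\|_T$ and $\|\nabla(w-q)\|_T$ carry enough powers of $h_T$. This is also where the rate is pinned down: the argument delivers the exponent $\min\{k-1,k_1\}$, which is $k_1$ in the range $k_1\le k-1$ that matters for the subsequent eigenvalue analysis. Everything else — the gradient term, and the $Q_b\big((w-Q_0w)|_{\partial T}\big)$ term — is routine and of at least as high an order.
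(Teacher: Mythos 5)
Your expansion of $\|w-Q_hw\|_V$, and your treatment of the gradient term and of $Q_b\big((w-Q_0w)|_{\partial T}\big)$, are fine; the problem is that the proposal does not prove the lemma as stated. By your own accounting, the term $(Q_bw-w)|_{\partial T}$ only yields the exponent $\min\{k-1,k_1\}$, so what you actually establish is $\|w-Q_hw\|_V\le Ch^{\min\{k-1,k_1\}}\|w\|_{k_1+1}$, strictly weaker than the asserted $Ch^{k_1}$ whenever $k_1\ge k$. Your claim that only the range $k_1\le k-1$ ``matters for the subsequent eigenvalue analysis'' is not correct for this paper: the lemma is later invoked with $k_1$ replaced by $k_0=\min\{k,k_1\}$, which equals $k$ for smooth eigenfunctions (the case of principal interest, since it is what produces the full order for the eigenvalues), and in the proof of Lemma \ref{operator-approx} it is invoked with $k_1=1$ while $k=1$ is admissible, in which case your exponent is $\min\{k-1,k_1\}=0$, the bound carries no power of $h$ at all, and the conclusion $\|K-K_h\|_V\to0$ --- the hinge of the whole spectral-approximation argument --- no longer follows.

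The paper's own proof is shorter and avoids this loss: it writes the boundary residual as $(Q_0w-w)-Q_b(Q_0w-w)=(I-Q_b)(Q_0w-w)$ and uses only that $I-Q_b$ is an $L^2(e)$-contraction, so everything reduces to $h_T^{-1}\|Q_0w-w\|_{\partial T}^2$, which the trace inequality \eqref{Trace inequality00} and Lemma \ref{projection-prop} control at the full order; no comparison polynomial of degree $k-1$ enters anywhere. The extra term $(Q_bw-w)|_{\partial T}$ in your identity --- the one that costs an order precisely because $Q_b$ only reproduces $P_{k-1}(e)$ while $Q_0$ reproduces $P_k(T)$ --- is absent from the paper's computation of the boundary residual. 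Before settling for the weaker rate you need to reconcile your identification of $Q_bv_0-v_b$ for $v=w-Q_hw$ (with $v_b$ taken as the trace of $w$ minus $Q_bw$) with the expression the paper actually estimates; as it stands, your argument requires either rewriting the residual in the paper's form or a genuinely sharper bound on $h_T^{-1/2}\|(I-Q_b)w\|_{\partial T}$ than an element-wise degree-$(k-1)$ comparison can deliver.
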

\begin{proof}
From the trace
inequality (\ref{Trace inequality00}) and Lemma
\ref{projection-prop} we have
\begin{eqnarray*}
&& \|Q_h w-w\|_V
\\
&\le& C\left(\sumT\Big(\|\nabla(Q_0 w-w)\|_T^2+ h^{-1}_T\|(Q_0 w-w)-
Q_b (Q_0w -w)\|
_{\partial T}^2\Big)\right)^{\frac12}
\\
&\le& C\left(\sumT\Big(\|\nabla(Q_0 w-w)\|_T^2+ h^{-1}_T\|Q_0 w- w\|
_{\partial T}^2\Big)\right)^{\frac12}
\\
&\le& C\left(\sumT\Big(\|\nabla(Q_0 w-w)\|_T^2+h^{-1}
\|Q_0 w- w\|_{\partial T}^2\Big)\right)^{\frac12}
\\
&\le& C h^{k_1}\|w\|_{k_1+1},
\end{eqnarray*}
which completes the proof.
\end{proof}

As we know, we can extend the operators $K$ and $K_h$ to the operators from
 $L^2(\Omega)$ to $V$
which will not change the non-zero spectrums of the operators $K$ and $K_h$.
\begin{lemma}\label{operator-approx}
The operators $K$ and $K_h$ have the following estimate
\begin{eqnarray*}
\lim_{h\rightarrow 0}\|K_h-K\|_V=0,
\end{eqnarray*}
where $\|\cdot\|_V$ denote the operator norm from $V$ to $V$.
\end{lemma}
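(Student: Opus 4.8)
The plan is to read off the operator-norm convergence from the Poisson error estimates of Section 3. Fix $v\in V$ and put $f:=v_0\in L^2(\Omega)$; by the definitions of $K$ and $K_h$, the function $u:=Kv$ is the weak solution of the Poisson problem (\ref{Poisson-eq1}) with right-hand side $f$, and $u_h:=K_hv$ is exactly the weak Galerkin solution (\ref{WG-scheme1}) for that same $f$. Because $\Omega$ is polygonal, elliptic regularity supplies a fixed exponent $\gamma\in(1/2,1]$ depending only on $\Omega$ --- one may take $\gamma=1$ whenever the Poisson problem has $H^2$-regularity, e.g.\ $\Omega$ convex --- together with $u\in H^{1+\gamma}(\Omega)$ and $\|u\|_{1+\gamma}\le C\|f\|=C\|v_0\|$. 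From now on I assume the stabilization parameter has been fixed with $0\le\eps<\gamma$, which is automatic when $\gamma=1$. I shall apply Theorem \ref{err-est} and Lemma \ref{interpolation} below with $k_1=\gamma$ (reading those results in their standard fractional-order form when $\gamma<1$), so that $k_0=\min\{k,\gamma\}=\gamma$, using $k\ge1\ge\gamma$.

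The core step is the triangle-inequality split
\[
\|(K_h-K)v\|_V=\|u_h-u\|_V\le\|u_h-Q_hu\|_V+\|Q_hu-u\|_V .
\]
The interpolation term is controlled by Lemma \ref{interpolation}: $\|Q_hu-u\|_V\le Ch^{\gamma}\|u\|_{1+\gamma}$. For the first term, $u_h-Q_hu\in V_h$, so Lemma \ref{norm-equi} gives $\|u_h-Q_hu\|_V\le C\trb{Q_hu-u_h}_1$, and the energy estimate (\ref{err-est2}) of Theorem \ref{err-est} yields $\trb{Q_hu-u_h}_1\le Ch^{\gamma-\eps}\|u\|_{1+\gamma}$. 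Combining these two bounds with the regularity estimate gives
\[
\|(K_h-K)v\|_V\le Ch^{\gamma-\eps}\|v_0\| .
\]

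It remains to replace $\|v_0\|$ by $\|v\|_V$. On $V_0$ this is Poincar\'e's inequality together with Lemma \ref{norm-equi3}; on $V_h$ it is the discrete Poincar\'e inequality $\|v_0\|\le C\trb{v}_1\le C\|v\|_V$ from Lemma \ref{norm-equi}; and for an arbitrary $v\in V=V_0+V_h$ one combines these with a broken Poincar\'e--Friedrichs inequality on the sum space (the same mechanism that underlies the claim, made just after the definition of $\|\cdot\|_V$, that $\|\cdot\|_V$ is a norm on $V$), so that $\|v_0\|\le C\|v\|_V$ for all $v\in V$ with $C$ independent of $h$. Hence $\|(K_h-K)v\|_V\le Ch^{\gamma-\eps}\|v\|_V$ for every $v\in V$, i.e.\ $\|K_h-K\|_V\le Ch^{\gamma-\eps}\to0$ as $h\to0$, because $\gamma-\eps>0$. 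I expect the only delicate point to be the regularity input: for a non-convex polygon $u=Kv$ is controlled only in $H^{1+\gamma}$ with $\gamma$ possibly below $1$, which forces the coupling $\eps<\gamma$ between the stabilization exponent and the domain in order to keep a strictly positive power of $h$; granted that, the proof is a routine assembly of Theorem \ref{err-est}, Lemma \ref{interpolation} and Lemma \ref{norm-equi}.
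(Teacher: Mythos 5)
Your proof is correct and follows essentially the same route as the paper's: split $\|Kv-K_hv\|_V$ via the triangle inequality through $Q_hu$, bound the interpolation piece by Lemma \ref{interpolation}, bound the discrete piece by Theorem \ref{err-est} combined with the norm equivalence of Lemma \ref{norm-equi}, and then absorb $\|v_0\|$ into $\|v\|_V$. The only difference is that you are more careful than the paper on two points it glosses over --- the paper silently assumes full $H^2$-regularity (writing $\|u\|_2\le C\|f\|$, which fails on non-convex polygons such as the L-shaped domain of Section 6) and the Poincar\'e-type bound $\|v_0\|\le C\|v\|_V$ on the sum space --- and your fractional-regularity variant with the constraint $\eps<\gamma$ is a genuine (and welcome) strengthening rather than a deviation in method.
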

\begin{proof}
Since $V$ is a Hilbert space, it is equivalent to verify that
\begin{eqnarray*}
\lim_{h\rightarrow 0}\sup_{\|f\|_V=1}\|K f-K_h f\|_V=0.
\end{eqnarray*}

For any $f\in V$ with $\|f\|_V=1$, suppose $u=Kf$ and $u_h=K_hf$.
From the error estimate (\ref{err-est2}) and the regularity of the
Poisson's equation, we have
\begin{eqnarray*}
\trb{Q_h u-u_h}_1\le Ch^{1-\eps}\|u\|_2\le
Ch^{1-\eps}\|f\|\le Ch^{1-\eps}\|f\|_V.
\end{eqnarray*}
Then the equivalence Lemma \ref{norm-equi} implies that
\begin{eqnarray*}
\|Q_h u-u_h\|_V\le Ch^{1-\eps}\|f\|_V.
\end{eqnarray*}
Moreover, by letting $k_1=1$ in Lemma \ref{interpolation}
we can obtain that
\begin{eqnarray*}
\|u-Q_h u\|_V \le Ch\|u\|_2 \le Ch\|f\|_V.
\end{eqnarray*}
It follows the triangle inequality that
\begin{eqnarray*}
\|K f - K_h f\|_V=\|u-u_h\|_V\le Ch^{1-\eps}\|f\|_V.
\end{eqnarray*}
Notice that $0\le\eps<1$, which completes the proof.
\end{proof}

\begin{lemma}
The operator $K_h: V\mapsto V$ is compact.
\end{lemma}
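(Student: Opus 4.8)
The plan is to exploit the fact that $K_h$ sends all of $V$ into the finite-dimensional subspace $V_h$, so that $K_h$ is a finite-rank operator; once $K_h$ is known to be bounded as a map $(V,\|\cdot\|_V)\to(V,\|\cdot\|_V)$, compactness follows at once, since a bounded linear operator with finite-dimensional range carries bounded sets into bounded subsets of that range, and bounded subsets of a finite-dimensional normed space are relatively compact (Heine--Borel). Thus the proof reduces to establishing the boundedness of $K_h$ on $V$.

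First I would check that $K_hf$ is well defined for every $f\in V$: $a_w(\cdot,\cdot)$ is an inner product on $V_h$ because its induced norm $\trb{\cdot}$ was already shown to be a genuine norm, and $v\mapsto(f_0,v_0)$ is a bounded linear functional on the finite-dimensional space $V_h$, so the Riesz representation theorem produces a unique $u_h:=K_hf\in V_h$. Taking $v=u_h$ in $a_w(u_h,v)=(f_0,v_0)$ and using Cauchy--Schwarz gives $\trb{u_h}^2=(f_0,(u_h)_0)\le\|f_0\|\,\|(u_h)_0\|$. Since $V_h$ is finite-dimensional, all norms on it are equivalent (with constants that may depend on $h$, which is harmless for a fixed mesh), so $\|(u_h)_0\|_{L^2(\Omega)}\le C\trb{u_h}$ and $\|u_h\|_V\le C\trb{u_h}$; combining these yields $\|K_hf\|_V\le C\|f_0\|_{L^2(\Omega)}$.

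It remains to bound $\|f_0\|_{L^2(\Omega)}$ by $\|f\|_V$ for arbitrary $f\in V$, which is a Poincar\'e-type estimate on the sum space and is the one genuinely technical point of the argument. For $f\in V_0$ it is the Poincar\'e--Friedrichs inequality $\|f_0\|\le C|f_0|_1\le C\|f\|_V$ underlying Lemma \ref{norm-equi3}; for $f\in V_h$ it is a discrete Poincar\'e inequality, which one can obtain by controlling $f_0$ on each element by its mean via the local Poincar\'e inequality, comparing that mean with $f_b$ on $\partial T$ through the trace inequality (\ref{Trace inequality00}), and then summing over edges while using $f_b=0$ on $\partial\Omega$; for a general $f=g+w\in V_0+V_h$ one simply combines the two bounds. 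With $\|f_0\|_{L^2(\Omega)}\le C\|f\|_V$ in hand, the estimates above give $\|K_hf\|_V\le C\|f\|_V$, so $K_h$ is bounded and, having finite rank, compact.
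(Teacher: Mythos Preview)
Your strategy is sound and differs from the paper's. You use that $K_h$ has range in the finite-dimensional space $V_h$, so compactness follows once $K_h$ is bounded on $(V,\|\cdot\|_V)$; you then reduce boundedness to a Poincar\'e-type inequality $\|f_0\|\le C\|f\|_V$ on the sum space. The paper instead factors $K_h=\tilde K_h\circ Q_h$, where $\tilde K_h$ is the restriction of $K_h$ to $V_h$ (automatically compact since $V_h$ is finite dimensional) and the factorization comes from the identity $(Q_0f,v_0)=(f,v_0)$ for $v_0\in P_k(T)$. Compactness then reduces to boundedness of $Q_h:V\to V_h$, which the paper gets from the interpolation estimate of Lemma~\ref{interpolation}: for $w\in V_0$ one has $\|Q_hw\|_V\le\|Q_hw-w\|_V+\|w\|_V\le C\|w\|_1+\|w\|_V\le C\|w\|_V$, while $Q_h$ is the identity on $V_h$. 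The paper's route thus recycles already-established lemmas and avoids proving any new Poincar\'e inequality, whereas your route is more self-contained but needs that extra analytic ingredient.

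One caveat on your sketch: the sentence ``for a general $f=g+w\in V_0+V_h$ one simply combines the two bounds'' does not work as stated, since nothing guarantees $\|g\|_V+\|w\|_V\le C\|g+w\|_V$ for an arbitrary splitting. The right fix is to observe that your discrete-Poincar\'e argument for $V_h$ (local Poincar\'e on each element, trace inequality, single-valued $f_b$ with $f_b=0$ on $\partial\Omega$) uses only that $f_0\in H^1(T)$ elementwise, and therefore already applies verbatim to every $f\in V$; no decomposition into summands is needed. (The paper's verification of the boundedness of $Q_h$ treats $V_0$ and $V_h$ separately as well, so this informality is shared.)
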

\begin{proof}
Denote $\tilde K_h$ the restriction of $K_h$ on $V_h$. Since $V_h$
is finite dimensional, $\tilde K_h$ is compact. Notice that $(Q_0 f,
v_0)=(f,v_0)$, so $K_h=\tilde K_h Q_h$. In order to prove that $K_h$
is compact, we just need to verify that $Q_h$ is bounded.

For any $w\in V_h$, $Q_h w=w$. For $w\in V_0$, we can conclude from
Lemma \ref{interpolation} that
\begin{eqnarray*}
\|Q_h w\|_V \le \|Q_h w-w\|_V+\|w\|_V \le C\|w\|_1+\|w\|_V
\le C\|w\|_V,
\end{eqnarray*}
which completes the proof.
\end{proof}

Now we review some notations in the spectral approximation theory.
We denote by $\sigma(K)$ the spectrum of $K$, and by $\rho(K)$ the
resolvent set. $R_z(K)=(z-K)^{-1}$ represents the resolvent operator.
Let $\mu$ be a nonzero eigenvalue of $T$ with algebraic multiplicities
$m$. Let $\Gamma$ be a circle in the complex plane centered at $\mu$
which lies in $\rho(K)$ and encloses no other points of
$\sigma(K)$. The corresponding spectral projection is
\begin{eqnarray*}
E=E(\mu)=\frac{1}{2\pi {\rm i}}\int_\Gamma R_z(K) dz.
\end{eqnarray*}
$R(E)$ represents the range of $E$, which is the space of generalized
eigenvectors.

For a Banach space $X$ and its closed subspaces $M$ and $N$, define
the distances as follows that
\begin{eqnarray*}
&&{\rm dist}(x,N)=\inf_{y\in N}\|x-y\|,~
\delta(M,N)=\sup_{\substack{x\in M,\\\|x\|=1}} {\rm dist}(x,N),
\\
&&\hat\delta(M,N)=\max\{\delta(M,N),\delta(N,M)\}.
\end{eqnarray*}
\begin{lemma}
Suppose the eigenvectors of \ $K$ have $k_1$-regularity, i.e. $w\in
H^{1+k_1}(\Omega)$ for any $w\in R(E)$. Denote $k_0=\min\{k_1,k\}$,
then the following estimate holds true,
\begin{eqnarray*}
\|K - K_h|_{R(E)}\|_V\le Ch^{k_0-\eps}.
\end{eqnarray*}
\end{lemma}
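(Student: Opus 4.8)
The plan is to bound $\|(K-K_h)w\|_V$ uniformly over $w\in R(E)$ with $\|w\|_V=1$, and then use the fact that $R(E)$ is finite-dimensional so that the regularity bound $\|w\|_{1+k_1}\le C\|w\|_V$ holds on $R(E)$ with a constant depending only on the (fixed) eigenvalue $\mu$. First I would fix $w\in R(E)$, set $u=Kw\in V_0$ and $u_h=K_hw\in V_h$. Since the eigenvectors have $k_1$-regularity, $u=Kw$ inherits at least the regularity of $w$ (indeed $Kw\in R(E)$ as well, since $K$ commutes with $E$), so $u\in H^{1+k_1}(\Omega)$ and $\|u\|_{k_1+1}\le C\|w\|_{k_1+1}\le C\|w\|_V$ by the norm equivalence on the finite-dimensional space $R(E)$.

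Next I would split the error by the triangle inequality in the $\|\cdot\|_V$ norm:
\begin{eqnarray*}
\|Kw-K_hw\|_V\le \|u-Q_hu\|_V+\|Q_hu-u_h\|_V.
\end{eqnarray*}
For the first term, Lemma \ref{interpolation} gives $\|u-Q_hu\|_V\le Ch^{k_1}\|u\|_{k_1+1}\le Ch^{k_0}\|w\|_V$ (using $k_0\le k_1$). For the second term, observe that $u_h=K_hw$ is precisely the WG solution of the Poisson problem with data $f=w_0$ (or rather $f$ with $Q_0f$-action matching), so Theorem \ref{err-est} applies with this $u$ as the exact solution: $\trb{Q_hu-u_h}_1\le Ch^{k_0-\eps}\|u\|_{k_0+1}$. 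Wait — more carefully, $K_hw$ solves $a_w(K_hw,v)=(w,v_0)$, and $Kw$ solves $a(Kw,v)=(w,v)$; this is exactly the pairing $(\ref{WG-scheme1})$ with $f=w$, so $Q_hu-u_h$ is the error $e_h$ of Theorem \ref{err-est}, giving $\trb{Q_hu-u_h}_1\le Ch^{k_0-\eps}\|u\|_{k_0+1}\le Ch^{k_0-\eps}\|w\|_V$. Then the equivalence Lemma \ref{norm-equi} converts $\trb\cdot_1$ to $\|\cdot\|_V$ on $V_h$, yielding $\|Q_hu-u_h\|_V\le Ch^{k_0-\eps}\|w\|_V$.

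Combining the two bounds and noting $h^{k_0}\le h^{k_0-\eps}$ for small $h$ gives $\|Kw-K_hw\|_V\le Ch^{k_0-\eps}\|w\|_V$ for every $w\in R(E)$, which is the claimed estimate. The main obstacle — and the only subtle point — is justifying that the regularity assumption on $R(E)$ really lets us absorb $\|u\|_{k_0+1}$ into a constant times $\|w\|_V$ uniformly: this needs $R(E)$ finite-dimensional (true, since $\mu$ has finite algebraic multiplicity $m$), the invariance $K(R(E))\subseteq R(E)$ (true, since the spectral projection $E$ commutes with $K$), and the equivalence of the $H^{1+k_1}$-norm and the $V$-norm on that finite-dimensional space (automatic). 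A secondary bookkeeping point is making sure the constant $C$ in Theorem \ref{err-est} does not hide a dependence on $w$ beyond $\|u\|_{k_0+1}$ — it does not, as the theorem is stated — so the argument closes cleanly.
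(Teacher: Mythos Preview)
Your proof is correct and follows essentially the same approach as the paper: both split $\|Kw-K_hw\|_V$ via $Q_hKw$, invoke Lemma~\ref{interpolation} for the interpolation piece, Theorem~\ref{err-est} together with Lemma~\ref{norm-equi} for the discrete piece, and then appeal to the finite dimensionality of $R(E)$ to obtain a uniform bound on $\|w\|_{k_0+1}$. You are slightly more explicit than the paper in justifying $Kw\in R(E)$ via the commutativity of $K$ and $E$, but the structure and ingredients are identical.
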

\begin{proof}
Suppose $w\in R(E)$ with $\|w\|_V=1$. Similar to the proof of Lemma
\ref{operator-approx}, from Theorem \ref{err-est} we have
\begin{eqnarray*}
\|Q_h K w-K_h w\|_V\le Ch^{k_0-\eps}\|T w\|_{k_0+1}.
\end{eqnarray*}
From Lemma \ref{interpolation} we can obtain
\begin{eqnarray*}
\|K w-Q_h K w\|_V\le Ch^{k_0}\|K w\|_{k_0+1},
\end{eqnarray*}
which implies
\begin{eqnarray*}
\|K w - K_h w\|_V\le Ch^{k_0-\eps}\|K w\|_{k_0+1}\le Ch^{k_0-\eps}\|w\|_{k_0+1}.
\end{eqnarray*}
Since $R(E)$ is finite dimensional, there is a uniform upper bound
for $\|w\|_{k_0+1}$, where $w\in R(E)$ with $\|w\|_V=1$,
which completes the proof.
\end{proof}

For the symmetry of $a(\cdot,\cdot)$ and $a_w(\cdot,\cdot)$,
$K$ and $K_h$ are self-adjoint. In addition, if we change the $\|\cdot\|_V$
norm to $L^2(\Omega)$ norm, all the conclusions in this section can be
interpreted trivially. Then from the theory in \cite{IBEig,Chatelin}, we can derive the following
estimates.
\begin{theorem}\label{Error_Theorem_Eigenpair}
Suppose $\lambda_{j,h}$ is the $j$-th eigenvalue of (\ref{WG-scheme}) and $u_{j,h}$ is the corresponding eigenvector.
There exist an exact eigenvalue $\lambda_j$ and the corresponding exact eigenfunction $u_j$ such that
the following error estimates hold
\begin{eqnarray}\label{eig-est1}
&&|\lambda_j-\lambda_{j,h}|\le Ch^{2k_0-2\eps}\|u_j\|_{k_0+1},
\\\label{eig-est2}
&&\|u_j-u_{j,h}\|_V \le Ch^{k_0-\eps}\|u_j\|_{k_0+1},
\\\label{eig-est3}
&&\|u_j-u_{j,h}\|\le Ch^{k_0+1-\eps}\|u_j\|_{k_0+1},
\end{eqnarray}
when $u_j\in H^{1+k_1}(\Omega)$ and $k_0=\min\{k_1,k\}$.
\end{theorem}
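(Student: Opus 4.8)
The plan is to recast (\ref{vartion-form}) and the WG scheme (\ref{WG-scheme}) as eigenvalue problems for the solution operators $K$ and $K_h$ and then quote the abstract spectral approximation theory of \cite{IBEig,Chatelin}; all the required analytic estimates have already been assembled in the preceding lemmas. Indeed $(\lambda,u)$ solves (\ref{vartion-form}) if and only if $Ku=\lambda^{-1}u$, and $(\lambda_h,u_h)$ solves (\ref{WG-scheme}) if and only if $K_hu_h=\lambda_h^{-1}u_h$ (the interior component of $u_h$ being what is actually tested in the $b_w$-pairing), so that $\mu_j:=\lambda_j^{-1}$ is the $j$-th largest eigenvalue of the compact self-adjoint operator $K$ and $\mu_{j,h}:=\lambda_{j,h}^{-1}$ the $j$-th largest eigenvalue of the compact self-adjoint operator $K_h$. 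By Lemma \ref{operator-approx} we have $\|K-K_h\|_V\to0$ (and the same in $L^2(\Omega)$), which is the convergence hypothesis of the abstract theory: the spectrum of $K_h$ converges to that of $K$ with no spurious eigenvalues, and for $h$ small exactly $m=\dim R(E(\mu_j))$ eigenvalues of $K_h$ (counted with multiplicity) lie near $\mu_j$; write $E=E(\mu_j)$ and $E_h$ for the corresponding spectral projections.

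For the eigenfunction estimates I would invoke the gap bound $\hat\delta(R(E),R(E_h))\le C\|(K-K_h)|_{R(E)}\|_V$ furnished by the theory; combined with the last lemma before the theorem this reads $\hat\delta(R(E),R(E_h))\le Ch^{k_0-\eps}$, hence to each discrete eigenpair $(\lambda_{j,h},u_{j,h})$ there corresponds an exact eigenfunction $u_j\in R(E)$ with $\|u_j-u_{j,h}\|_V\le Ch^{k_0-\eps}$, and the finite dimensionality of $R(E)$ lets us write the constant as $C\|u_j\|_{k_0+1}$: this is (\ref{eig-est2}). Running the same argument with $L^2(\Omega)$ in place of $\|\cdot\|_V$ -- the operator bound now being $\|(K-K_h)|_{R(E)}\|_{L^2}\le Ch^{k_0+1-\eps}$, which follows from Theorem \ref{err-estL2} and the projection estimate (\ref{w estimate}) at $s=0$ -- gives (\ref{eig-est3}).

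For the eigenvalue estimate (\ref{eig-est1}) I would use the refinement available for self-adjoint operators: the deviation of each eigenvalue $\mu_{j,h}$ near $\mu_j$ is controlled by the quadratic term $\|(K-K_h)|_{R(E)}\|_V^2$ plus consistency pairings, the former being $O(h^{2k_0-2\eps})$ straight from the last lemma. Equivalently, testing the WG error relation (the analogue of (\ref{error-eqn}) for the eigenvalue problem) against $u_{j,h}$ and using $b_w(u_{j,h},u_{j,h})=1$ writes $\lambda_{j,h}-\lambda_j$ as a quotient whose denominator tends to $1$ and whose numerator involves only $\ell(u_j,\cdot)$ and $s(Q_h u_j,\cdot)$, both $O(h^{2k_0-\eps})$ by Lemma \ref{remainder} and Theorem \ref{err-est}. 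In either case $|\mu_j-\mu_{j,h}|\le Ch^{2k_0-2\eps}\|u_j\|_{k_0+1}$; since $\mu_j$ and $\mu_{j,h}$ are uniformly bounded below away from $0$,
\begin{eqnarray*}
|\lambda_j-\lambda_{j,h}|=\frac{|\mu_j-\mu_{j,h}|}{\mu_j\mu_{j,h}}\le Ch^{2k_0-2\eps}\|u_j\|_{k_0+1},
\end{eqnarray*}
which is (\ref{eig-est1}).

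The step I expect to be the main obstacle is the quadratic eigenvalue estimate, both for an abstract reason and a concrete one. Abstractly, $b_w$ is only a semi-inner product on $V$, so the extension of $K$ and $K_h$ to operators on $V$ and the precise sense in which they are self-adjoint -- the natural Hilbert structure being $L^2(\Omega)$ -- must be pinned down before the self-adjoint refinement can be applied. Concretely, the WG discretization is non-conforming: the stabilization $s(\cdot,\cdot)$ sits inside $a_w$ and the consistency functional $\ell(\cdot,\cdot)$ appears in the error relation, and one must check that neither spoils the doubled order $h^{2k_0-2\eps}$ -- which is exactly what the sharpened bounds of Lemma \ref{remainder} (with the $h^{\pm\eps/2}$ weights) and the energy estimate of Theorem \ref{err-est} are arranged to deliver, together with an Aubin--Nitsche duality argument of the type behind Theorem \ref{err-estL2}.
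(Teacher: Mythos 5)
Your proposal follows essentially the same route as the paper: the paper likewise reduces the problem to the solution operators $K$ and $K_h$, establishes norm convergence, compactness, self-adjointness, and the restricted-operator bound $\|(K-K_h)|_{R(E)}\|_V\le Ch^{k_0-\eps}$ in the preceding lemmas, and then simply invokes the abstract spectral approximation theory of Babu\v{s}ka--Osborn and Chatelin (including the $L^2$ variant and the quadratic eigenvalue refinement) to conclude. Your sketch in fact spells out more of the standard machinery than the paper does, and correctly identifies where the details would need to be pinned down.
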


\section{Lower bounds}
In this section, we shall demonstrate that the approximate eigenvalue
$\lambda_h$ generated by (\ref{WG-scheme}) is an asymptotic lower bound of
$\lambda$. About the topic of lower bound of the eigenvalues, please refer to \cite{HuHuangLin,Liu,LuoLinXie,ZhangYangChen}
and the references cited therein.
 In this section, the parameter $\eps$ is required
to be positive, i.e. $0<\eps<1$.
\begin{lemma}\label{expansion}
Suppose $(\lambda,u)$ is the solution of (\ref{vartion-form}), and $(\lambda_h,u_h)$ is the solution of
(\ref{WG-scheme}). Then we have the following expansion that
for any $v\in V_h$,
\begin{eqnarray*}
\lambda-\lambda_h&=&\|\nabla u-\nabla_w u\|^2+s(u_h-v,u_h-v)-\lambda_h
\|u_0-v_0\|^2-\lambda_h(\|u_0\|^2-\|v_0\|^2)
\\&&+2(\nabla u-
\nabla_w v,\nabla_w u_h)-s(v,v).
\end{eqnarray*}
\end{lemma}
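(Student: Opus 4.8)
The asserted identity is purely algebraic, so the plan is simply to reorganize three structural facts. First, testing (\ref{vartion-form}) with $v=u$ and using $b(u,u)=1$ gives $\lambda=a(u,u)=\|\nabla u\|^2$; testing (\ref{WG-scheme}) with $v=u_h$ and using $b_w(u_h,u_h)=\|u_0\|^2=1$ gives $\lambda_h=a_w(u_h,u_h)=\|\nabla_w u_h\|^2+s(u_h,u_h)$; and for an arbitrary test function the scheme (\ref{WG-scheme}) reads $(\nabla_w u_h,\nabla_w w)+s(u_h,w)=\lambda_h(u_0,w_0)$ for all $w\in V_h$. Only these three relations, together with elementary completion of the square, are used.

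I would start from $\lambda-\lambda_h=\|\nabla u\|^2-\|\nabla_w u_h\|^2-s(u_h,u_h)$ and complete the square against $\nabla_w u_h$, writing $\|\nabla u\|^2=\|\nabla u-\nabla_w u_h\|^2+2(\nabla u,\nabla_w u_h)-\|\nabla_w u_h\|^2$; this already produces the first term on the right-hand side of the expansion. In the cross term I would insert the arbitrary $v\in V_h$, $2(\nabla u,\nabla_w u_h)=2(\nabla u-\nabla_w v,\nabla_w u_h)+2(\nabla_w v,\nabla_w u_h)$, and then eliminate $(\nabla_w v,\nabla_w u_h)$ via the eigen-equation with test function $v$, i.e.\ $(\nabla_w v,\nabla_w u_h)=\lambda_h(u_0,v_0)-s(u_h,v)$. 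Finally I would use $\|\nabla_w u_h\|^2=\lambda_h-s(u_h,u_h)$ to trade the remaining $\|\nabla_w u_h\|^2$ contributions for $\lambda_h$ and $s(u_h,u_h)$.

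What is left is to recombine the stabilizer terms and the mass terms. For the stabilizer, polarization gives $s(u_h,u_h)-2s(u_h,v)=s(u_h-v,u_h-v)-s(v,v)$, exactly the $s$-contribution in the statement. For the mass terms the accumulated expression is $2\lambda_h(u_0,v_0)-2\lambda_h$; invoking the normalization $\|u_0\|^2=1$ to write $2\lambda_h=2\lambda_h\|u_0\|^2$ and then expanding $\|u_0-v_0\|^2$ turns this into $-\lambda_h\|u_0-v_0\|^2-\lambda_h(\|u_0\|^2-\|v_0\|^2)$, which is exactly the pair of mass terms in the statement. Collecting all pieces yields the asserted expansion.

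The proof has no analytic obstacle; the only care needed is bookkeeping. Two points are worth flagging: the normalization $\|u_0\|^2=1$ must be invoked precisely at the mass-term step (the term $-\lambda_h(\|u_0\|^2-\|v_0\|^2)$ is present exactly to compensate for a test function $v$ that is not $b_w$-normalized), and the weak Galerkin eigen-equation mixes the $\nabla_w$ and $s$ pieces, so it must be applied as a whole with the correct test function rather than split between the two. I would also note, for the use of this lemma in Section 5, that the choice $v=Q_h u$ together with the commutative property (\ref{commu-prop}) and $\nabla_w u_h\in[P_{k-1}(T)]^2$ makes the term $2(\nabla u-\nabla_w v,\nabla_w u_h)$ vanish identically; this plays no role in the identity itself.
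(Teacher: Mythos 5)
Your proof is correct and follows essentially the same route as the paper's: expand the square $\|\nabla u-\nabla_w u_h\|^2$, insert the arbitrary $v$, eliminate $(\nabla_w v,\nabla_w u_h)$ via the discrete eigen-equation, and use the normalizations $\|u\|=\|u_0\|=1$ to recombine the mass and stabilizer terms. Note that your bookkeeping (like the paper's own derivation) actually produces $\|\nabla u-\nabla_w u_h\|^2$ as the first term, so the $\|\nabla u-\nabla_w u\|^2$ appearing in the printed statement is a typo; this is consistent with how the lemma is later used with $v=Q_h u$, where that term splits into $\|\nabla u-\dQ_h\nabla u\|^2+\|\nabla_w(Q_h u-u_h)\|^2$ by orthogonality.
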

\begin{proof}
First, we have the following formulas
\begin{eqnarray*}
&&a(u,u)=\|\nabla u\|^2=\lambda\|u\|^2,
\\
&&a_w(u_h,u_h)=\|\nabla_w u_h\|^2+s(u_h,u_h)=\lambda_h\|u_0\|^2,
\\
&&\|u\|=\|u_0\|=1.
\end{eqnarray*}
Mimicking the expansion in \cite{AD} and we have the following
expansion
\begin{eqnarray*}
&&(\nabla u-\nabla_w u_h,\nabla u-\nabla_w u_h)
\\
&=&(\nabla u,\nabla u)+(\nabla_w u_h,\nabla_w u_h)-2(\nabla u,
\nabla_w u_h)
\\
&=&\lambda+\lambda_h-2(\nabla u,\nabla_w u_h)-s(u_h,u_h)
\\
&=&\lambda+\lambda_h-2(\nabla u-\nabla_w v,\nabla_w u_h)
-2(\nabla_w v,\nabla_w u_h)-s(u_h,u_h)
\\
&=&\lambda+\lambda_h-2(\nabla u-\nabla_w v,\nabla_w u_h)
-2\lambda_h(u_0,v_0)+2s(u_h,v)-s(u_h,u_h)
\\
&=&\lambda+\lambda_h-2(\nabla u-\nabla_w v,\nabla_w u_h)
+\lambda_h(u_0-v_0,u_0-v_0)-\lambda_h(u_0,u_0)-\lambda_h(v_0,v_0)
\\
&&+2s(u_h,v)-s(u_h,u_h)
\\
&=&\lambda-\lambda_h-2(\nabla u-\nabla_w v,\nabla_w u_h)
+\lambda_h(u_0-v_0,u_0-v_0)+\lambda_h((u_0,u_0)-(v_0,v_0))
\\
&&+2s(u_h,v)-s(u_h,u_h).
\end{eqnarray*}
Rearranging the above formula and it follows that
\begin{eqnarray*}
\lambda-\lambda_h&=&\|\nabla u-\nabla_w u\|^2+s(u_h-v,u_h-v)-\lambda_h
\|u_0-v_0\|^2-\lambda_h(\|u_0\|^2-\|v_0\|^2)
\\&&+2(\nabla u-
\nabla_w v,\nabla_w u_h)-s(v,v),
\end{eqnarray*}
which completes the proof.
\end{proof}
\begin{lemma}\label{convergence-rate}\rm{(\cite{LX1})}
The following lower bound for the convergence rate holds for the exact eigenfunction $u$ of
the eigenvalue problem (\ref{vartion-form})
\begin{eqnarray*}
\|\nabla u-\dQ_h \nabla u\|\ge Ch^{2k}.
\end{eqnarray*}
\end{lemma}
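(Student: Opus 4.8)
The plan is to bound from below the quantity $\|\nabla u-\dQ_h\nabla u\|$ by first observing that, by the orthogonality of the $L^2$-projection $\dQ_h$, for every piecewise vector polynomial $\bq$ of degree at most $k-1$ we have $\|\nabla u-\dQ_h\nabla u\|\le\|\nabla u-\bq\|$; equivalently, $\|\nabla u-\dQ_h\nabla u\|$ is precisely the best $L^2$-approximation error of $\nabla u$ by piecewise degree-$(k-1)$ vector polynomials. Since the eigenfunction $u$ is analytic in the interior of $\Omega$ (it solves $-\Delta u=\lambda u$ with a smooth right-hand side) and is \emph{not} itself a polynomial, the Bramble--Hilbert/Taylor-expansion argument that gives the upper bound $\|\nabla u-\dQ_h\nabla u\|\le Ch^{k}\|u\|_{k+1}$ can be reversed on at least one fixed element: on a generic element $T$, the leading-order term of the Taylor expansion of $\nabla u$ beyond degree $k-1$ is a nonzero homogeneous polynomial of degree $k$, and the $L^2(T)$-distance from this term to $P_{k-1}(T)$-vector polynomials is bounded below by $c\,h_T^{k+d/2}$ times the size of that Taylor coefficient.

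The key steps, in order, are: (i) fix an element $T^\ast$ (say one where $\nabla u$ is not a polynomial of degree $k-1$ — such an element must exist, otherwise $u$ would be a global polynomial of degree $k$, contradicting $-\Delta u=\lambda u$ with $\lambda>0$ and $u\not\equiv 0$); (ii) write the Taylor expansion of each component of $\nabla u$ about the centroid of $T^\ast$ up to order $k$ with integral remainder, so that $\nabla u = p_{k-1} + p_k^{(h)} + R$ where $p_{k-1}\in[P_{k-1}(T^\ast)]^2$, $p_k^{(h)}$ collects the degree-$k$ terms, and $\|R\|_{L^2(T^\ast)}\le C h_{T^\ast}^{k+1+d/2}\|u\|_{W^{k+1,\infty}(T^\ast)}$; (iii) use the triangle inequality $\|\nabla u-\dQ_h\nabla u\|_{L^2(T^\ast)}\ge \operatorname{dist}_{L^2(T^\ast)}(p_k^{(h)},[P_{k-1}(T^\ast)]^2)-\|R\|_{L^2(T^\ast)}$, together with a scaling argument showing $\operatorname{dist}_{L^2(T^\ast)}(p_k^{(h)},[P_{k-1}(T^\ast)]^2)\ge c_0 h_{T^\ast}^{k+d/2}$ with $c_0>0$ depending only on the (fixed, nonzero) degree-$k$ Taylor coefficients of $\nabla u$ at that point and on shape-regularity; (iv) for $h$ small the remainder term is of higher order in $h_{T^\ast}$ and is absorbed, leaving $\|\nabla u-\dQ_h\nabla u\|\ge\|\nabla u-\dQ_h\nabla u\|_{L^2(T^\ast)}\ge \tfrac{c_0}{2}h_{T^\ast}^{k+d/2}\ge Ch^{2k}$, where in the last inequality we only used $h_{T^\ast}\le h$ together with a crude exponent comparison (in fact a much stronger bound $\ge Ch^{k+d/2}$ holds, but the statement as written, with exponent $2k$, follows a fortiori for $d=2$ when $k\ge 1$ as long as $k+1\le 2k$, i.e. $k\ge1$).

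The main obstacle is step (iii): one must verify that the $L^2$-distance from the degree-$k$ homogeneous part to lower-degree polynomials really is bounded below, and that this lower bound does not degenerate as the mesh is refined. This is handled by a compactness/scaling argument — map $T^\ast$ to a reference element $\hat T$ of unit size, observe that $\operatorname{dist}_{L^2(\hat T)}(\hat p,[P_{k-1}(\hat T)]^2)$ is a continuous function of the coefficient vector of $\hat p$ that vanishes only when $\hat p\in[P_{k-1}(\hat T)]^2$, and use that the relevant (scaled) coefficients of $\nabla u$ converge to a fixed nonzero limit (the exact degree-$k$ Taylor data of $\nabla u$ at the limit point), so the distance stays bounded away from zero; then scale back to $T^\ast$, picking up the factor $h_{T^\ast}^{k}$ from the polynomial degree and $h_{T^\ast}^{d/2}$ from the change of variables in the $L^2$ norm. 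A secondary technical point is ensuring the chosen element $T^\ast$ can be taken with $h_{T^\ast}$ comparable to $h$ (or at least bounded below by a fixed fraction of $h$) under the regularity assumptions on $\T_h$, which is what lets us convert $h_{T^\ast}^{k+d/2}$ into a power of $h$; under quasi-uniformity this is immediate, and for shape-regular meshes it suffices to note that since $\nabla u$ fails to be a degree-$(k-1)$ polynomial on an open subset of $\Omega$, we may select $T^\ast$ inside that subset, and then a fixed power of $h$ lower bound follows on that fixed region.
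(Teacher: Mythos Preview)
The paper does not supply its own proof of this lemma; it is quoted directly from \cite{LX1} without argument. Your proposal---fix a point $x_0$ where the order-$(k{+}1)$ derivatives of $u$ do not all vanish (such a point exists because a Laplace eigenfunction with $\lambda>0$ cannot be a polynomial), choose an element $T^\ast$ near $x_0$, Taylor-expand $\nabla u$ about its centroid, and use a scaling-to-reference-element argument to show that the $L^2(T^\ast)$-distance of the nonzero degree-$k$ homogeneous part from $[P_{k-1}(T^\ast)]^2$ is bounded below by $c\,h_{T^\ast}^{k+d/2}$---is exactly the strategy of the cited reference, and is correct.

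Two side remarks. First, your single-element bound already yields $\|\nabla u-\dQ_h\nabla u\|\ge Ch^{k+1}$ in two dimensions, while summing the same per-element estimate over all elements contained in a fixed ball where $|D^{k+1}u|\ge c_0>0$ gives the sharp bound $\ge Ch^{k}$; this sharp version is what \cite{LX1} actually proves and what the proof of Theorem~5.3 in the present paper implicitly uses (the exponent $2k$ in the lemma is evidently intended for the \emph{squared} norm---as literally stated it would be too weak to make $s(Q_hu,Q_hu)\le Ch^{2k_0+\eps}$ a higher-order term). Second, you are right to flag the conversion of $h_{T^\ast}$ into $h$ as the delicate point: the argument in \cite{LX1} uses quasi-uniformity, and your closing sentence about shape-regularity alone does not close the gap, since shape-regularity gives no lower bound on $h_T$ in terms of $h=\max_T h_T$.
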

\begin{theorem}
Let $\lambda_j$ and $\lambda_{j,h}$ be the $j$-th exact eigenvalue
and its corresponding weak Galerkin numerical approximation. Assume
the corresponding eigenvector $u\in H^{1+k_1}(\Omega)$. Denote
$k_0=\min\{k,k_1\}$. Then if the mesh size $h$ is small enough,
there exists a constant C such that
\begin{eqnarray*}
0\le \lambda_j-\lambda_{j,h}\le Ch^{2k_0-2\eps}\|u\|_{k_0+1}.
\end{eqnarray*}
\end{theorem}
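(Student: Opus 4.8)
The bound $\lambda_j-\lambda_{j,h}\le Ch^{2k_0-2\eps}\|u\|_{k_0+1}$ is already at hand: (\ref{eig-est1}) of Theorem~\ref{Error_Theorem_Eigenpair} gives $|\lambda_j-\lambda_{j,h}|\le Ch^{2k_0-2\eps}\|u\|_{k_0+1}$. So the real content is the asymptotic lower-bound property $\lambda_{j,h}\le\lambda_j$ for $h$ small, and the tool is the expansion identity of Lemma~\ref{expansion}. Although that lemma is written for ``the'' eigenpair, its proof uses only the eigen-relations of the pair in question and the normalizations $\|u_j\|=1$, $b_w(u_{j,h},u_{j,h})=1$, so it applies verbatim to the $j$-th pair $(\lambda_j,u_j)$, $(\lambda_{j,h},u_{j,h})$. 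Write $u=u_j$, $\lambda=\lambda_j$, $u_h=u_{j,h}$, $\lambda_h=\lambda_{j,h}$, $e_h=Q_hu-u_h$, and plug the test function $v=Q_hu\in V_h$ into the expansion.

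With $v=Q_hu$ the right-hand side collapses. By the commutative property (\ref{commu-prop}), $\nabla_wv=\nabla_w(Q_hu)=\dQ_h\nabla u$. Consequently the cross term vanishes, $2(\nabla u-\nabla_wv,\nabla_wu_h)=2(\nabla u-\dQ_h\nabla u,\nabla_wu_h)=0$, since on each $T$ we have $\nabla_wu_h|_T\in[P_{k-1}(T)]^2$ while $\nabla u-\dQ_h\nabla u$ is $L^2(T)$-orthogonal to $[P_{k-1}(T)]^2$ by the definition of $\dQ_h$; by the same orthogonality the first term of the expansion equals $\|\nabla u-\dQ_h\nabla u\|^2+\|\nabla_we_h\|^2$, hence is $\ge\|\nabla u-\dQ_h\nabla u\|^2$. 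Moreover $s(u_h-v,u_h-v)=s(e_h,e_h)\ge0$ and may be dropped; $-\lambda_h\|u_{h,0}-Q_0u\|^2\le0$; and, since $Q_0$ is the $L^2$-projection and $\|u\|=1$, $\|u_{h,0}\|^2-\|Q_0u\|^2=1-\|Q_0u\|^2=\|u-Q_0u\|^2$. Collecting these observations gives
\begin{eqnarray*}
\lambda_j-\lambda_{j,h}\ge\|\nabla u-\dQ_h\nabla u\|^2-\lambda_{j,h}\|u_{h,0}-Q_0u\|^2-\lambda_{j,h}\|u-Q_0u\|^2-s(Q_hu,Q_hu).
\end{eqnarray*}
It then suffices to check that each subtracted quantity is of strictly higher order than $\|\nabla u-\dQ_h\nabla u\|^2$. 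For the mass terms, Lemma~\ref{projection-prop} gives $\|u-Q_0u\|^2\le Ch^{2(k_0+1)}\|u\|_{k_0+1}^2$, and the triangle inequality with (\ref{eig-est3}) and Lemma~\ref{projection-prop} gives $\|u_{h,0}-Q_0u\|\le\|u-u_{h,0}\|+\|u-Q_0u\|\le Ch^{k_0+1-\eps}\|u\|_{k_0+1}$, while $\lambda_{j,h}$ is uniformly bounded. For the stabilization term, the trace inequality (Lemma~\ref{Trace inequality}) together with Lemma~\ref{projection-prop} yields
\begin{eqnarray*}
s(Q_hu,Q_hu)=\sumT h_T^{-1+\eps}\|Q_b(Q_0u-u)\|_{\partial T}^2\le\sumT h_T^{-1+\eps}\|Q_0u-u\|_{\partial T}^2\le Ch^{2k_0+\eps}\|u\|_{k_0+1}^2.
\end{eqnarray*}
Finally, Lemma~\ref{convergence-rate} furnishes the essential lower bound $\|\nabla u-\dQ_h\nabla u\|^2\ge Ch^{2k_0}$, which dominates. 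Hence
\begin{eqnarray*}
\lambda_j-\lambda_{j,h}\ge Ch^{2k_0}\bigl(1-C(h^{\eps}+h^{2-2\eps}+h^{2})\bigr)\ge0
\end{eqnarray*}
for $h$ small enough, because $0<\eps<1$; combined with (\ref{eig-est1}) this proves the theorem.

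The decisive and most delicate step is this last comparison, where positivity of the stabilization exponent, $\eps>0$, is indispensable: the lower-bound property holds exactly because $s(Q_hu,Q_hu)=O(h^{2k_0+\eps})$ is of strictly higher order than $\|\nabla u-\dQ_h\nabla u\|^2\ge Ch^{2k_0}$, while $\eps<1$ keeps the $O(h^{2(k_0+1-\eps)})$ mass correction subordinate too. A secondary care point is the sign bookkeeping in the expansion: $1-\|Q_0u\|^2$ is nonnegative and hence enters subtractively, but being $O(h^{2(k_0+1)})$ it is harmless; the stabilization square $s(e_h,e_h)$ enters with the favourable sign and is discarded outright.
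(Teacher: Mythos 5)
Your proposal is correct and follows essentially the same route as the paper: the expansion of Lemma \ref{expansion} with $v=Q_hu$, the commutativity/orthogonality of $\dQ_h$ to kill the cross term, Theorem \ref{Error_Theorem_Eigenpair} and Lemma \ref{projection-prop} to show the mass and stabilization corrections are of orders $h^{2k_0+2-2\eps}$, $h^{2k_0+2}$ and $h^{2k_0+\eps}$, and Lemma \ref{convergence-rate} to make $\|\nabla u-\dQ_h\nabla u\|^2\ge Ch^{2k_0}$ the dominant positive term for small $h$. Your treatment is in fact slightly cleaner at two points the paper glosses over (the Pythagorean split $\|\nabla u-\nabla_w u_h\|^2=\|\nabla u-\dQ_h\nabla u\|^2+\|\nabla_w e_h\|^2$ and the identity $\|u_0\|^2-\|Q_0u\|^2=\|u-Q_0u\|^2$), but the argument is the same.
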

\begin{proof}
Take $v=Q_h u$ in Lemma \ref{expansion}. From the commutative
property in Lemma \ref{commu-prop}, there is
\begin{eqnarray*}
\nabla_w v=\dQ_h \nabla u,
\end{eqnarray*}
and it follows that
\begin{eqnarray*}
\lambda-\lambda_h&=&\|\nabla u-\nabla_w u\|^2+s(u_h-v,u_h-v)-\lambda_h
\|u_0-v_0\|^2-\lambda_h(\|u_0\|^2-\|v_0\|^2)
\\&&+2(\nabla u-
\nabla_w v,\nabla_w u_h)-s(v,v)
\\
&=&\|\nabla u-\dQ_h \nabla u\|^2+\trb{Q_hu-u_h}^2
-\lambda_h\|Q_0 u-u_0\|^2-\lambda_h(\|u_0\|^2-\|Q_0 u\|^2)
\\&&+2(\nabla u-
\dQ_h\nabla u,\nabla_w u_h)-s(Q_h u,Q_h u).
\end{eqnarray*}

Since $\nabla_w u_h\in [P_{k-1}(T)]^2$, we can obtain
\begin{eqnarray*}
(\nabla u-\dQ_h\nabla u,\nabla_w u_h)=0.
\end{eqnarray*}
From the error estimate (\ref{eig-est2}) and (\ref{eig-est3}),
we have
\begin{eqnarray*}
\trb{Q_h u-u_h}^2\le\|Q_h u-u_h\|_V^2\le Ch^{2k_0-2\eps}\|u\|_{k_0+1}
\end{eqnarray*}
and
\begin{eqnarray*}
\|Q_0 u-u_0\|^2\le Ch^{2k_0+2-2\eps}\|u\|_{k_0+1}.
\end{eqnarray*}
Also, it follows the property of projection in Lemma \ref{projection-prop}
that
\begin{eqnarray*}
\|Q_0 u-u_0\|^2&=&(u_0+Q_0 u,u_0-Q_0 u)
\\
&=&((u-u_0)+(u-Q_0 u),(u-u_0)-(u-Q_0 u))
\\
&=&\|u-u_0\|^2-\|u-Q_0 u\|^2
\\
&\le& Ch^{2k_0+2-2\eps}\|u\|_{k_0+1},
\\
s(Q_h u,Q_h u) &=& \sumT h_T^{1+\eps}\|Q_b Q_0 u -Q_b u\|_{\partial T}^2
\\
&\le& \sumT h_T^{1+\eps}\|Q_0 u -u\|_{\partial T}^2
\\
&\le& Ch^{2k_0+\eps}\|u\|_{k_0+1}.
\end{eqnarray*}
From Lemma \ref{convergence-rate}, we know the terms $\lambda_h\|u_0-Q_0 u\|^2$,
$\lambda_h(\|u_0\|^2-\|Q_0 u\|^2))$, $(\nabla u-\dQ_h\nabla u,\nabla_w u_h)$,
and $s(Q_hu, Q_hu)$ are of higher order than $\|\nabla u-\dQ_h\nabla u\|^2$,
so that
\begin{eqnarray*}
Ch^{2k_0}\|u\|_{k_0+1}\le \trb{Q_h u-u_h}^2+\|\nabla u-\dQ_h\nabla u\|^2\le Ch^{2k_0-2\eps}\|u\|_{k_0+1}
\end{eqnarray*}
is the dominant term, which completes the proof.
\end{proof}

\begin{remark}
 In the next section, the numerical results show that the convergence
rates in fact tend to $2k_0-\eps$. On the other hand, the numerical eigenvalue $\lambda_h$
is still a lower bound even if $\eps=0$.
\end{remark}

\section{Numerical Experiments}
In this section, we shall present some numerical results for the
weak Galerkin method analyzed in the previous sections.

\subsection{Unit square domain} In the first example, we consider the problem
(\ref{problem-eq}) on the square domain
$\Omega=(0,1)^2$. It has the analytic solution
\begin{eqnarray*}
\lambda= (m^2+n^2)\pi^2,\ \ \ \ \
&&u=\sin(m\pi x)\sin(n\pi y),
\end{eqnarray*}
where $m$, $n$ are arbitrary integers. The first four eigenvalues
are $\lambda_1=2\pi^2$, $\lambda_2=5\pi^2$, $\lambda_3=8\pi^2$ and
$\lambda_4=10\pi^2$, where $\lambda_2$ and $\lambda_4$ have $2$
algebraic and geometric multiplicities.

The uniform mesh is applied in the following examples, and $h$
denote the mesh size. Different choices of the parameter $\eps$ and
the degree of polynomial $k$ are presented.
The corresponding numerical results for the first four eigenvalues are showed in
Tables \ref{Tables_Exam1_1}-\ref{Tables_Exam1_6}. From these tables, we can find the
weak Galerkin method can also give the reasonable numerical approximations.
Furthermore, the choice of $\eps$ can really affect the convergence order which means the convergence results in
 Theorem \ref{Error_Theorem_Eigenpair} are also reasonable. The numerical results included in
  Tables \ref{Tables_Exam1_1}-\ref{Tables_Exam1_6} shows the eigenvalue approximation $\lambda_{j,h}$
 is a lower bound of the exact eigenvalue $\lambda_j$.
 Tables \ref{Tables_Exam1_7}-\ref{Tables_Exam1_12} shows the convergence
 behavior of the eigenfunction approximations
 which reveal the convergence results in Theorem \ref{Error_Theorem_Eigenpair}.

{\footnotesize
\begin{table}[]
\centering \caption{Convergence rates for $\eps=0.1$ and $k=1$.}\label{Tables_Exam1_1}
\begin{tabular}{|c|c|c|c|c|c|c|}
\hline
$h$ &  1/4 &  1/8     & 1/16  &  1/32   & 1/64 & 1/128 \\
\hline \hline
$\lambda_1-\lambda_{1,h}$   &  4.6914e+0 & 1.5050e+0 & 4.2473e-1 & 1.1518e-1 & 3.0896e-2 & 8.2640e-3 \\
\hline
order                       &             & 1.6403     & 1.8251     & 1.8826     &  1.8984    & 1.9025     \\
\hline
$\lambda_2-\lambda_{2,h}$   & 2.2610e+1  & 8.8734e+0 & 2.7033e+0 & 7.4857e-1 & 2.0152e-1 & 5.3843e-2   \\
\hline
order    &     & 1.3494 & 1.7147 & 1.8525 & 1.8932 & 1.9041   \\
\hline
$\lambda_3-\lambda_{3,h}$ & 4.4453e+1 & 1.9725e+1 & 6.3969e+0 & 1.8123e+0 & 4.9211e-1 & 1.3206e-1   \\
\hline
order  &   & 1.1722 & 1.6246 & 1.8196 & 1.8808 & 1.8977   \\
\hline
$\lambda_4-\lambda_{4,h}$ & 6.4193e+1 & 3.1210e+1 & 1.0638e+1 & 3.0563e+0 & 8.3024e-1 & 2.2206e-1 \\
\hline
order   &   & 1.0404 & 1.5528 & 1.7993 & 1.8802 & 1.9026  \\
\hline
\end{tabular}
\end{table}
}

{\footnotesize
\begin{table}[]
\centering \caption{Convergence rates for $\eps=0.05$ and $k=1$.}\label{Tables_Exam1_2}
 \begin{tabular}{|c|c|c|c|c|c|c|}
\hline
$h$                         &  1/4        &  1/8        & 1/16      &  1/32       & 1/64       & 1/128     \\
\hline \hline
$\lambda_1-\lambda_{1,h}$   & 4.5174e+0   & 1.3948e+0   & 3.7944e-1 & 9.9389e-2   & 2.5770e-2  & 6.6642e-3 \\
\hline
order                       &             &1.6955       & 1.8781    & 1.9327      & 1.9474     & 1.9512     \\
\hline
$\lambda_2-\lambda_{2,h}$   & 2.2044e+1  & 8.3229e+0    & 2.4379e+0 & 6.5153e-1   & 1.6962e-1  & 4.3857e-2  \\
\hline
order                       &            & 1.4052       & 1.7714    & 1.9037      & 1.9415     & 1.9514     \\
\hline
$\lambda_3-\lambda_{3,h}$   & 4.3486e+1  & 1.8531e+1    & 5.7514e+0 & 1.5674e+0   & 4.1076e-1  & 1.0652e-1  \\
\hline
order                       &            & 1.2306       & 1.6880    & 1.8756      & 1.9320     & 1.9471    \\
\hline
$\lambda_4-\lambda_{4,h}$   & 6.3226e+1  & 2.9650e+1    & 9.6842e+0 & 2.6795e+0   & 7.0366e-1  & 1.8219e-1  \\
\hline
order                       &            & 1.0925       &  1.6143   & 1.8537      & 1.9290     & 1.9494   \\
\hline
\end{tabular}
\end{table}
}

{\footnotesize
\begin{table}[]
\centering \caption{Convergence rates for $\eps=0$ and $k=1$.}\label{Tables_Exam1_3}
 \begin{tabular}{|c|c|c|c|c|c|c|}
\hline
$h$                         &  1/4        &  1/8        & 1/16      &  1/32       & 1/64       & 1/128     \\
\hline \hline
$\lambda_1-\lambda_{1,h}$ & 4.3486e+0 & 1.2926e+0 & 3.3916e-1 & 8.5854e-2 & 2.1531e-2 & 5.3870e-3 \\
\hline
order &       & 1.7503  & 1.9302  & 1.9820  & 1.9955  & 1.9989  \\
\hline
$\lambda_2-\lambda_{2,h}$ & 2.1485e+1 & 7.8051e+0 & 2.2003e+0 & 5.6820e-1 & 1.4323e-1 & 3.5882e-2 \\
\hline
order &       & 1.4608  & 1.8267  & 1.9532  & 1.9881  & 1.9970  \\
\hline
$\lambda_3-\lambda_{3,h}$ & 4.2518e+1 & 1.7394e+1 & 5.1703e+0 & 1.3566e+0 & 3.4342e-1 & 8.6124e-2 \\
\hline
order &       & 1.2895  & 1.7503  & 1.9302  & 1.9820  & 1.9955  \\
\hline
$\lambda_4-\lambda_{4,h}$ & 6.2257e+1 & 2.8157e+1 & 8.8226e+0 & 2.3548e+0 & 5.9881e-1 & 1.5035e-1 \\
\hline
order &       & 1.1447  & 1.6742  & 1.9056  & 1.9754  & 1.9938  \\
\hline
\end{tabular}
\end{table}
}

{\footnotesize
\begin{table}[]
\centering \caption{Convergence rates for $\eps=0.1$ and $k=2$.}\label{Tables_Exam1_4}
 \begin{tabular}{|c|c|c|c|c|c|c|}
\hline
$h$                         &  1/4        &  1/8        & 1/16      &  1/32       & 1/64       & 1/128     \\
\hline \hline
$\lambda_1-\lambda_{1,h}$ & 2.2414e-1 & 1.4066e-2 & 9.2552e-4 & 6.1762e-5 & 4.1371e-6 & 2.7718e-7 \\ \hline
order &       & 3.9941  & 3.9259  & 3.9055  & 3.9000  & 3.8997  \\ \hline
$\lambda_2-\lambda_{2,h}$ & 3.4702e+0 & 1.9914e-1 & 1.2641e-2 & 8.3435e-4 & 5.5667e-5 & 3.7246e-6 \\ \hline
order &       & 4.1232  & 3.9776  & 3.9213  & 3.9058  & 3.9016  \\ \hline
$\lambda_3-\lambda_{3,h}$ & 1.5809e+1 & 9.7847e-1 & 6.0649e-2 & 3.9786e-3 & 2.6525e-4 & 1.7761e-5 \\ \hline
order &       & 4.0141  & 4.0120  & 3.9302  & 3.9068  & 3.9006  \\ \hline
$\lambda_4-\lambda_{4,h}$ & 2.7771e+1 & 2.0077e+0 & 1.2222e-1 & 7.9478e-3 & 5.2797e-4 & 3.5281e-5 \\ \hline
order &       & 3.7900  & 4.0380  & 3.9428  & 3.9120  & 3.9035  \\ \hline
\end{tabular}
\end{table}
}

\begin{table}[]
\centering \caption{Convergence rates for $\eps=0.05$ and $k=2$.}\label{Tables_Exam1_5}
\begin{tabular}{|c|c|c|c|c|c|c|c|c|}
\hline
$h$      &  1/4        &  1/8        & 1/16      &  1/32       & 1/64       & 1/128     \\
\hline \hline
$\lambda_1-\lambda_{1,h}$ & 2.1062e-1 & 1.2818e-2 & 8.1597e-4 & 5.2613e-5 & 3.4037e-6 & 2.2032e-7 \\ \hline
order &       & 4.0384  & 3.9735  & 3.9550  & 3.9503  & 3.9494  \\ \hline
$\lambda_2-\lambda_{2,h}$ & 3.2288e+0 & 1.8086e-1 & 1.1158e-2 & 7.1301e-4 & 4.5989e-5 & 2.9733e-6 \\ \hline
order &       & 4.1580  & 4.0188  & 3.9680  & 3.9546  & 3.9511  \\ \hline
$\lambda_3-\lambda_{3,h}$ & 1.4735e+1 & 8.8125e-1 & 5.3250e-2 & 3.3846e-3 & 2.1813e-4 & 1.4109e-5 \\ \hline
order &       & 4.0635  & 4.0487  & 3.9757  & 3.9557  & 3.9505  \\ \hline
$\lambda_4-\lambda_{4,h}$ & 2.6186e+1 & 1.8074e+0 & 1.0753e-1 & 6.7866e-3 & 4.3622e-4 & 2.8175e-5 \\ \hline
order &       & 3.8568  & 4.0711  & 3.9859  & 3.9596  & 3.9525  \\ \hline
\end{tabular}
\end{table}

\begin{table}[]
\centering \caption{Convergence rates for $\eps=0$ and $k=2$.}\label{Tables_Exam1_6}
\begin{tabular}{|c|c|c|c|c|c|c|c|c|}
\hline
$h$ &    1/4   &    1/8   &    1/16  &    1/32  &    1/64  &    1/128 \\
\hline \hline
$\lambda_1-\lambda_{1,h}$ & 1.9798e-1 & 1.1680e-2 & 7.1897e-4 & 4.4770e-5 & 2.7957e-6 & 1.7471e-7 \\ \hline
order &       & 4.0833  & 4.0219  & 4.0053  & 4.0012  & 4.0002  \\ \hline
$\lambda_2-\lambda_{2,h}$ & 3.0076e+0 & 1.6440e-1 & 9.8518e-3 & 6.0934e-4 & 3.7989e-5 & 2.3730e-6 \\ \hline
order &       & 4.1933  & 4.0606  & 4.0151  & 4.0036  & 4.0008  \\ \hline
$\lambda_3-\lambda_{3,h}$ & 1.3731e+1 & 7.9457e-1 & 4.6751e-2 & 2.8767e-3 & 1.7911e-4 & 1.1184e-5 \\ \hline
order &       & 4.1111  & 4.0871  & 4.0225  & 4.0055  & 4.0013  \\ \hline
$\lambda_4-\lambda_{4,h}$ & 2.4673e+1 & 1.6299e+0 & 9.4683e-2 & 5.7963e-3 & 3.6040e-4 & 2.2497e-5 \\ \hline
order &       & 3.9201  & 4.1056  & 4.0299  & 4.0075  & 4.0018  \\ \hline
\end{tabular}
\end{table}

{\footnotesize
\begin{table}[]
\centering \caption{Convergence rates for $\eps=0.1$ and $k=1$.}\label{Tables_Exam1_7}
\begin{tabular}{|c|c|c|c|c|c|c|c|c|c|c|c|c|}
\hline
$h$     &    1/4   &    1/8   &    1/16  &    1/32  &    1/64  &    1/128 \\ \hline \hline
$\trb{Q_h u_1-u_{1,h}}$ & 2.1026e+0 & 1.1328e+0 & 5.9471e-1 & 3.1002e-1 & 1.6128e-1 & 8.3847e-2 \\ \hline
order & 0.0000  & 0.8923  & 0.9296  & 0.9398  & 0.9427  & 0.9438  \\ \hline
$\|Q_h u_1-u_{1,h}\|$ & 2.4300e-1 & 6.7500e-2 & 1.8371e-2 & 4.9711e-3 & 1.3428e-3 & 3.6248e-4 \\ \hline
order & 0.0000  & 1.8480  & 1.8775  & 1.8858  & 1.8883  & 1.8893  \\ \hline
$\trb{Q_h u_2-u_{2,h}}$ & 4.1174e+0 & 2.6638e+0 & 1.4918e+0 & 7.9139e-1 & 4.1310e-1 & 2.1467e-1 \\ \hline
order & 0.0000  & 0.6283  & 0.8364  & 0.9146  & 0.9379  & 0.9444  \\ \hline
$\|Q_h u_2-u_{2,h}\|$ & 2.0401e-1 & 5.0309e-2 & 1.2723e-2 & 3.2491e-3 & 8.3336e-4 & 2.1441e-4 \\ \hline
order & 0.0000  & 2.0198  & 1.9833  & 1.9694  & 1.9630  & 1.9586  \\ \hline
$\trb{Q_h u_3-u_{3,h}}$ & 4.1174e+0 & 2.6638e+0 & 1.4918e+0 & 7.9139e-1 & 4.1310e-1 & 2.1467e-1 \\ \hline
order & 0.0000  & 0.6283  & 0.8364  & 0.9146  & 0.9379  & 0.9444  \\ \hline
$\|Q_h u_3-u_{3,h}\|$ & 2.0401e-1 & 5.0309e-2 & 1.2723e-2 & 3.2491e-3 & 8.3336e-4 & 2.1441e-4 \\ \hline
order & 0.0000  & 2.0198  & 1.9833  & 1.9694  & 1.9630  & 1.9586  \\ \hline
$\trb{Q_h u_4-u_{4,h}}$ & 8.7353e+0 & 4.3751e+0 & 2.3571e+0 & 1.2372e+0 & 6.4476e-1 & 3.3534e-1 \\ \hline
order & 0.0000  & 0.9975  & 0.8923  & 0.9300  & 0.9402  & 0.9431  \\ \hline
$\|Q_h u_4-u_{4,h}\|$ & 1.1748e+0 & 2.6252e-1 & 7.2940e-2 & 1.9847e-2 & 5.3688e-3 & 1.4497e-3 \\ \hline
order & 0.0000  & 2.1620  & 1.8477  & 1.8778  & 1.8863  & 1.8888  \\ \hline
$\trb{Q_h u_5-u_{5,h}}$ & 7.6747e+0 & 4.9834e+0 & 2.9631e+0 & 1.6029e+0 & 8.4057e-1 & 4.3693e-1 \\ \hline
order & 0.0000  & 0.6230  & 0.7500  & 0.8864  & 0.9313  & 0.9440  \\ \hline
$\|Q_h u_5-u_{5,h}\|$ & 7.4542e-1 & 1.1915e-1 & 2.9679e-2 & 7.5124e-3 & 1.9132e-3 & 4.8882e-4 \\ \hline
order & 0.0000  & 2.6453  & 2.0053  & 1.9821  & 1.9733  & 1.9686  \\ \hline
$\trb{Q_h u_6-u_{6,h}}$ & 6.9213e+0 & 4.9834e+0 & 2.9631e+0 & 1.6029e+0 & 8.4057e-1 & 4.3693e-1 \\ \hline
order & 0.0000  & 0.4739  & 0.7500  & 0.8864  & 0.9313  & 0.9440  \\ \hline
$\|Q_h u_6-u_{6,h}\|$ & 4.8677e-1 & 1.1915e-1 & 2.9679e-2 & 7.5124e-3 & 1.9132e-3 & 4.8882e-4 \\ \hline
order & 0.0000  & 2.0304  & 2.0053  & 1.9821  & 1.9733  & 1.9686  \\ \hline
\end{tabular}
\end{table}
}

\begin{table}[]
\centering \caption{Convergence rates for $\eps=0.05$ and $k=1$.}\label{Tables_Exam1_8}
\begin{tabular}{|c|c|c|c|c|c|c|c|c|c|c|c|c|}
\hline
$h$     &    1/4   &    1/8   &    1/16  &    1/32  &    1/64  &    1/128 \\ \hline
$\trb{Q_h u_1-u_{1,h}}$ & 2.0408e+0 & 1.0776e+0 & 5.5472e-1 & 2.8361e-1 & 1.4474e-1 & 7.3831e-2 \\ \hline
order &       & 0.9213  & 0.9580  & 0.9678  & 0.9704  & 0.9712  \\ \hline
$\|Q_h u_1-u_{1,h}\|$ & 2.2929e-1 & 6.1245e-2 & 1.6035e-2 & 4.1758e-3 & 1.0858e-3 & 2.8224e-4 \\ \hline
order &       & 1.9045  & 1.9333  & 1.9411  & 1.9432  & 1.9438  \\ \hline
$\trb{Q_h u_2-u_{2,h}}$ & 4.0525e+0 & 2.5606e+0 & 1.4016e+0 & 7.2851e-1 & 3.7309e-1 & 1.9031e-1 \\ \hline
order &       & 0.6623  & 0.8693  & 0.9441  & 0.9654  & 0.9711  \\ \hline
$\|Q_h u_2-u_{2,h}\|$ & 2.0213e-1 & 4.9226e-2 & 1.2287e-2 & 3.0925e-3 & 7.8035e-4 & 1.9713e-4 \\ \hline
order &       & 2.0378  & 2.0023  & 1.9903  & 1.9866  & 1.9850  \\ \hline
$\trb{Q_h u_3-u_{3,h}}$ & 4.0525e+0 & 2.5606e+0 & 1.4016e+0 & 7.2851e-1 & 3.7309e-1 & 1.9031e-1 \\ \hline
order &       & 0.6623  & 0.8693  & 0.9441  & 0.9654  & 0.9711  \\ \hline
$\|Q_h u_3-u_{3,h}\|$ & 2.0213e-1 & 4.9226e-2 & 1.2287e-2 & 3.0925e-3 & 7.8035e-4 & 1.9713e-4 \\ \hline
order &       & 2.0378  & 2.0023  & 1.9903  & 1.9866  & 1.9850  \\ \hline
$\trb{Q_h u_4-u_{4,h}}$ & 7.4014e+0 & 4.1637e+0 & 2.1988e+0 & 1.1318e+0 & 5.7863e-1 & 2.9528e-1 \\ \hline
order &       & 0.8299  & 0.9211  & 0.9581  & 0.9679  & 0.9705  \\ \hline
$\|Q_h u_4-u_{4,h}\|$ & 8.7222e-1 & 2.3834e-1 & 6.3677e-2 & 1.6672e-2 & 4.3413e-3 & 1.1288e-3 \\ \hline
order &       & 1.8717  & 1.9042  & 1.9333  & 1.9412  & 1.9433  \\ \hline
$\trb{Q_h u_5-u_{5,h}}$ & 6.8886e+0 & 4.8293e+0 & 2.8003e+0 & 1.4825e+0 & 7.6263e-1 & 3.8924e-1 \\ \hline
order &       & 0.5124  & 0.7862  & 0.9176  & 0.9590  & 0.9703  \\ \hline
$\|Q_h u_5-u_{5,h}\|$ & 4.9173e-1 & 1.1747e-1 & 2.8947e-2 & 7.2442e-3 & 1.8217e-3 & 4.5881e-4 \\ \hline
order &       & 2.0656  & 2.0208  & 1.9985  & 1.9916  & 1.9893  \\ \hline
$\trb{Q_h u_6-u_{6,h}}$ & 6.8882e+0 & 4.8293e+0 & 2.8003e+0 & 1.4825e+0 & 7.6263e-1 & 3.8924e-1 \\ \hline
order &       & 0.5123  & 0.7862  & 0.9176  & 0.9590  & 0.9703  \\ \hline
$\|Q_h u_6-u_{6,h}\|$ & 4.9159e-1 & 1.1747e-1 & 2.8947e-2 & 7.2442e-3 & 1.8217e-3 & 4.5881e-4 \\ \hline
order &       & 2.0652  & 2.0208  & 1.9985  & 1.9916  & 1.9893  \\ \hline
\end{tabular}
\end{table}

\begin{table}[]
\centering \caption{Convergence rates for $\eps=0$ and $k=1$.}\label{Tables_Exam1_9}
\begin{tabular}{|c|c|c|c|c|c|c|c|c|c|c|c|c|}
\hline
$h$    &    1/4   &    1/8   &    1/16  &    1/32  &    1/64  &    1/128 \\ \hline \hline
$\trb{Q_h u_1-u_{1,h}}$ & 1.9806e+0 & 1.0247e+0 & 5.1693e-1 & 2.5905e-1 & 1.2960e-1 & 6.4808e-2 \\ \hline
order &       & 0.9508  & 0.9871  & 0.9967  & 0.9992  & 0.9998  \\ \hline
$\|Q_h u_1-u_{1,h}\|$ & 2.1635e-1 & 5.5551e-2 & 1.3986e-2 & 3.5027e-3 & 8.7607e-4 & 2.1904e-4 \\ \hline
order &       & 1.9615  & 1.9898  & 1.9974  & 1.9994  & 1.9998  \\ \hline
$\trb{Q_h u_2-u_{2,h}}$ & 3.9877e+0 & 2.4601e+0 & 1.3162e+0 & 6.7019e-1 & 3.3666e-1 & 1.6852e-1 \\ \hline
order &       & 0.6968  & 0.9024  & 0.9737  & 0.9933  & 0.9983  \\ \hline
$\|Q_h u_2-u_{2,h}\|$ & 2.0039e-1 & 4.8309e-2 & 1.1944e-2 & 2.9773e-3 & 7.4379e-4 & 1.8591e-4 \\ \hline
order &       & 2.0524  & 2.0160  & 2.0042  & 2.0011  & 2.0003  \\ \hline
$\trb{Q_h u_3-u_{3,h}}$ & 3.9877e+0 & 2.4601e+0 & 1.3162e+0 & 6.7019e-1 & 3.3666e-1 & 1.6852e-1 \\ \hline
order &       & 0.6968  & 0.9024  & 0.9737  & 0.9933  & 0.9983  \\ \hline
$\|Q_h u_3-u_{3,h}\|$ & 2.0039e-1 & 4.8309e-2 & 1.1944e-2 & 2.9773e-3 & 7.4379e-4 & 1.8591e-4 \\ \hline
order &       & 2.0524  & 2.0160  & 2.0042  & 2.0011  & 2.0003  \\ \hline
$\trb{Q_h u_4-u_{4,h}}$ & 1.2992e+1 & 3.9612e+0 & 2.0494e+0 & 1.0339e+0 & 5.1810e-1 & 2.5920e-1 \\ \hline
order &       & 1.7136  & 0.9508  & 0.9871  & 0.9967  & 0.9992  \\ \hline
$\|Q_h u_4-u_{4,h}\|$ & 1.9731e+0 & 2.1635e-1 & 5.5551e-2 & 1.3986e-2 & 3.5027e-3 & 8.7607e-4 \\ \hline
order &       & 3.1890  & 1.9615  & 1.9898  & 1.9974  & 1.9994  \\ \hline
$\trb{Q_h u_5-u_{5,h}}$ & 6.8652e+0 & 4.6781e+0 & 2.6456e+0 & 1.3710e+0 & 6.9191e-1 & 3.4677e-1 \\ \hline
order &       & 0.5534  & 0.8223  & 0.9484  & 0.9865  & 0.9966  \\ \hline
$\|Q_h u_5-u_{5,h}\|$ & 4.9983e-1 & 1.1603e-1 & 2.8373e-2 & 7.0491e-3 & 1.7595e-3 & 4.3969e-4 \\ \hline
order &       & 2.1069  & 2.0319  & 2.0090  & 2.0023  & 2.0006  \\ \hline
$\trb{Q_h u_6-u_{6,h}}$ & 8.5941e+0 & 4.6781e+0 & 2.6456e+0 & 1.3710e+0 & 6.9191e-1 & 3.4677e-1 \\ \hline
order &       & 0.8774  & 0.8223  & 0.9484  & 0.9865  & 0.9966  \\ \hline
$\|Q_h u_6-u_{6,h}\|$ & 9.9163e-1 & 1.1603e-1 & 2.8373e-2 & 7.0491e-3 & 1.7595e-3 & 4.3969e-4 \\ \hline
order &       & 3.0953  & 2.0319  & 2.0090  & 2.0023  & 2.0006  \\ \hline
\end{tabular}
\end{table}

\begin{table}[]
\centering \caption{Convergence rates for $\eps=0.1$ and $k=2$.}\label{Tables_Exam1_10}
\begin{tabular}{|c|c|c|c|c|c|c|c|c|c|c|c|c|}
\hline
$h$    &    1/4   &    1/8   &    1/16  &    1/32  &    1/64  &    1/128 \\ \hline \hline
$\trb{Q_h u_1-u_{1,h}}$ & 4.1653e-1 & 1.0911e-1 & 2.8410e-2 & 7.3891e-3 & 1.9213e-3 & 4.9944e-4 \\ \hline
order &       & 1.9327  & 1.9413  & 1.9429  & 1.9433  & 1.9437  \\ \hline
$\|Q_h u_1-u_{1,h}\|$ & 8.5534e-2 & 1.0676e-2 & 1.4001e-3 & 1.8702e-4 & 2.5108e-5 & 3.3748e-6 \\ \hline
order &       & 3.0021  & 2.9308  & 2.9043  & 2.8970  & 2.8953  \\ \hline
$\trb{Q_h u_2-u_{2,h}}$ & 1.5575e+0 & 4.3076e-1 & 1.1323e-1 & 2.9496e-2 & 7.6668e-3 & 1.9916e-3 \\ \hline
order &       & 1.8543  & 1.9277  & 1.9406  & 1.9438  & 1.9447  \\ \hline
$\|Q_h u_2-u_{2,h}\|$ & 3.2591e-1 & 4.1406e-2 & 5.2583e-3 & 6.9630e-4 & 9.3495e-5 & 1.2600e-5 \\ \hline
order &       & 2.9766  & 2.9772  & 2.9168  & 2.8967  & 2.8914  \\ \hline
$\trb{Q_h u_3-u_{3,h}}$ & 1.7381e+0 & 4.4628e-1 & 1.1430e-1 & 2.9567e-2 & 7.6714e-3 & 1.9919e-3 \\ \hline
order &       & 1.9615  & 1.9651  & 1.9508  & 1.9464  & 1.9453  \\ \hline
$\|Q_h u_3-u_{3,h}\|$ & 3.5920e-1 & 4.5291e-2 & 5.7680e-3 & 7.6165e-4 & 1.0170e-4 & 1.3623e-5 \\ \hline
order &       & 2.9875  & 2.9731  & 2.9209  & 2.9047  & 2.9003  \\ \hline
$\trb{Q_h u_4-u_{4,h}}$ & 3.1633e+0 & 8.8821e-1 & 2.2815e-1 & 5.9192e-2 & 1.5378e-2 & 3.9962e-3 \\ \hline
order &       & 1.8324  & 1.9609  & 1.9465  & 1.9446  & 1.9441  \\ \hline
$\|Q_h u_4-u_{4,h}\|$ & 6.6471e-1 & 9.6376e-2 & 1.1636e-2 & 1.5123e-3 & 2.0146e-4 & 2.7020e-5 \\ \hline
order &       & 2.7860  & 3.0501  & 2.9438  & 2.9082  & 2.8984  \\ \hline
$\trb{Q_h u_5-u_{5,h}}$ & 3.3748e+0 & 1.2150e+0 & 3.2184e-1 & 8.4035e-2 & 2.1843e-2 & 5.6714e-3 \\ \hline
order &       & 1.4738  & 1.9166  & 1.9373  & 1.9438  & 1.9454  \\ \hline
$\|Q_h u_5-u_{5,h}\|$ & 6.5236e-1 & 1.1991e-1 & 1.4765e-2 & 1.9227e-3 & 2.5710e-4 & 3.4647e-5 \\ \hline
order &       & 2.4437  & 3.0217  & 2.9410  & 2.9027  & 2.8915  \\ \hline
$\trb{Q_h u_6-u_{6,h}}$ & 4.2203e+0 & 1.3252e+0 & 3.2987e-1 & 8.4571e-2 & 2.1878e-2 & 5.6737e-3 \\ \hline
order &       & 1.6711  & 2.0063  & 1.9637  & 1.9507  & 1.9471  \\ \hline
$\|Q_h u_6-u_{6,h}\|$ & 7.2192e-1 & 1.3575e-1 & 1.6679e-2 & 2.1691e-3 & 2.8817e-4 & 3.8522e-5 \\ \hline
order &       & 2.4109  & 3.0248  & 2.9429  & 2.9121  & 2.9032  \\ \hline
\end{tabular}
\end{table}

\begin{table}[]
\centering \caption{Convergence rates for $\eps=0.05$ and $k=2$.}\label{Tables_Exam1_11}
\begin{tabular}{|c|c|c|c|c|c|c|c|c|c|c|c|c|}
\hline
$h$      &    1/4   &    1/8   &    1/16  &    1/32  &    1/64  &    1/128 \\ \hline \hline
$\trb{Q_h u_1-u_{1,h}}$ & 4.0028e-1 & 1.0330e-1 & 2.6433e-2 & 6.7483e-3 & 1.7218e-3 & 4.3922e-4 \\ \hline
order &       & 1.9542  & 1.9664  & 1.9698  & 1.9706  & 1.9709  \\ \hline
$\|Q_h u_1-u_{1,h}\|$ & 7.9910e-2 & 9.6767e-3 & 1.2274e-3 & 1.5829e-4 & 2.0503e-5 & 2.6586e-6 \\ \hline
order &       & 3.0458  & 2.9789  & 2.9550  & 2.9486  & 2.9471  \\ \hline
$\trb{Q_h u_2-u_{2,h}}$ & 1.4826e+0 & 4.0594e-1 & 1.0531e-1 & 2.6977e-2 & 6.8859e-3 & 1.7561e-3 \\ \hline
order &       & 1.8688  & 1.9466  & 1.9649  & 1.9700  & 1.9713  \\ \hline
$\|Q_h u_2-u_{2,h}\|$ & 2.9953e-1 & 3.7029e-2 & 4.5641e-3 & 5.8360e-4 & 7.5522e-5 & 9.8044e-6 \\ \hline
order &       & 3.0160  & 3.0203  & 2.9673  & 2.9500  & 2.9454  \\ \hline
$\trb{Q_h u_3-u_{3,h}}$ & 1.6545e+0 & 4.2057e-1 & 1.0631e-1 & 2.7041e-2 & 6.8900e-3 & 1.7563e-3 \\ \hline
order &       & 1.9760  & 1.9840  & 1.9751  & 1.9726  & 1.9719  \\ \hline
$\|Q_h u_3-u_{3,h}\|$ & 3.3275e-1 & 4.0902e-2 & 5.0762e-3 & 6.4953e-4 & 8.3845e-5 & 1.0847e-5 \\ \hline
order &       & 3.0242  & 3.0104  & 2.9663  & 2.9536  & 2.9504  \\ \hline
$\trb{Q_h u_4-u_{4,h}}$ & 3.0151e+0 & 8.3014e-1 & 2.1137e-1 & 5.3984e-2 & 1.3775e-2 & 3.5138e-3 \\ \hline
order &       & 1.8608  & 1.9736  & 1.9692  & 1.9705  & 1.9709  \\ \hline
$\|Q_h u_4-u_{4,h}\|$ & 6.2435e-1 & 8.5941e-2 & 1.0140e-2 & 1.2774e-3 & 1.6440e-4 & 2.1281e-5 \\ \hline
order &       & 2.8609  & 3.0833  & 2.9888  & 2.9579  & 2.9496  \\ \hline
$\trb{Q_h u_5-u_{5,h}}$ & 3.2512e+0 & 1.1345e+0 & 2.9867e-1 & 7.6888e-2 & 1.9646e-2 & 5.0100e-3 \\ \hline
order &       & 1.5189  & 1.9255  & 1.9577  & 1.9685  & 1.9713  \\ \hline
$\|Q_h u_5-u_{5,h}\|$ & 6.0052e-1 & 1.0549e-1 & 1.2715e-2 & 1.6035e-3 & 2.0673e-4 & 2.6825e-5 \\ \hline
order &       & 2.5090  & 3.0525  & 2.9873  & 2.9554  & 2.9461  \\ \hline
$\trb{Q_h u_6-u_{6,h}}$ & 4.0657e+0 & 1.2374e+0 & 3.0612e-1 & 7.7378e-2 & 1.9678e-2 & 5.0120e-3 \\ \hline
order &       & 1.7162  & 2.0152  & 1.9841  & 1.9754  & 1.9731  \\ \hline
$\|Q_h u_6-u_{6,h}\|$ & 6.8404e-1 & 1.2118e-1 & 1.4639e-2 & 1.8523e-3 & 2.3824e-4 & 3.0778e-5 \\ \hline
order &       & 2.4969  & 3.0493  & 2.9824  & 2.9588  & 2.9525  \\ \hline
\end{tabular}
\end{table}

\begin{table}[]
\centering \caption{Convergence rates for $\eps=0$ and $k=2$.}\label{Tables_Exam1_12}
\begin{tabular}{|c|c|c|c|c|c|c|c|c|c|c|c|c|}
\hline
$h$    &    1/4   &    1/8   &    1/16  &    1/32  &    1/64  &    1/128 \\ \hline \hline
$\trb{Q_h u_1-u_{1,h}}$ & 3.8476e-1 & 9.7751e-2 & 2.4561e-2 & 6.1490e-3 & 1.5379e-3 & 3.8452e-4 \\ \hline
order &       & 1.9768  & 1.9927  & 1.9980  & 1.9994  & 1.9998  \\ \hline
$\|Q_h u_1-u_{1,h}\|$ & 7.4702e-2 & 8.7708e-3 & 1.0752e-3 & 1.3375e-4 & 1.6701e-5 & 2.0872e-6 \\ \hline
order &       & 3.0904  & 3.0281  & 3.0070  & 3.0015  & 3.0003  \\ \hline
$\trb{Q_h u_2-u_{2,h}}$ & 1.4123e+0 & 3.8267e-1 & 9.7879e-2 & 2.4631e-2 & 6.1684e-3 & 1.5428e-3 \\ \hline
order &       & 1.8839  & 1.9670  & 1.9906  & 1.9975  & 1.9993  \\ \hline
$\|Q_h u_2-u_{2,h}\|$ & 2.7564e-1 & 3.3148e-2 & 3.9609e-3 & 4.8855e-4 & 6.0870e-5 & 7.6034e-6 \\ \hline
order &       & 3.0558  & 3.0650  & 3.0192  & 3.0047  & 3.0010  \\ \hline
$\trb{Q_h u_3-u_{3,h}}$ & 1.5761e+0 & 3.9645e-1 & 9.8807e-2 & 2.4690e-2 & 6.1721e-3 & 1.5431e-3 \\ \hline
order &       & 1.9911  & 2.0045  & 2.0007  & 2.0001  & 2.0000  \\ \hline
$\|Q_h u_3-u_{3,h}\|$ & 3.0857e-1 & 3.6994e-2 & 4.4723e-3 & 5.5453e-4 & 6.9212e-5 & 8.6507e-6 \\ \hline
order &       & 3.0603  & 3.0482  & 3.0117  & 3.0022  & 3.0001  \\ \hline
$\trb{Q_h u_4-u_{4,h}}$ & 2.8709e+0 & 7.7652e-1 & 1.9567e-1 & 4.9133e-2 & 1.2299e-2 & 3.0759e-3 \\ \hline
order &       & 1.8864  & 1.9886  & 1.9937  & 1.9981  & 1.9995  \\ \hline
$\|Q_h u_4-u_{4,h}\|$ & 5.8517e-1 & 7.6776e-2 & 8.8369e-3 & 1.0776e-3 & 1.3385e-4 & 1.6705e-5 \\ \hline
order &       & 2.9301  & 3.1190  & 3.0358  & 3.0091  & 3.0022  \\ \hline
$\trb{Q_h u_5-u_{5,h}}$ & 3.1281e+0 & 1.0609e+0 & 2.7714e-1 & 7.0267e-2 & 1.7634e-2 & 4.4131e-3 \\ \hline
order &       & 1.5600  & 1.9366  & 1.9797  & 1.9945  & 1.9985  \\ \hline
$\|Q_h u_5-u_{5,h}\|$ & 5.5149e-1 & 9.3075e-2 & 1.0962e-2 & 1.3372e-3 & 1.6608e-4 & 2.0730e-5 \\ \hline
order &       & 2.5669  & 3.0858  & 3.0352  & 3.0093  & 3.0021  \\ \hline
$\trb{Q_h u_6-u_{6,h}}$ & 3.9117e+0 & 1.1571e+0 & 2.8405e-1 & 7.0715e-2 & 1.7663e-2 & 4.4148e-3 \\ \hline
order &       & 1.7573  & 2.0263  & 2.0061  & 2.0013  & 2.0003  \\ \hline
$\|Q_h u_6-u_{6,h}\|$ & 6.4688e-1 & 1.0852e-1 & 1.2881e-2 & 1.5861e-3 & 1.9765e-4 & 2.4697e-5 \\ \hline
order &       & 2.5756  & 3.0746  & 3.0217  & 3.0044  & 3.0005  \\ \hline
\end{tabular}
\end{table}

\subsection{L shape domain}
Now we consider the eigenvalue problem
(\ref{problem-eq}) on the L shape domain
$\Omega=(-1,1)^2\backslash (0,1)^2$.

We also use the weak Galerkin method to solve this eigenvalue problem and Table \ref{Tables_Exam2_1}-\ref{Tables_Exam2_6}
presents the corresponding numerical results for the first six eigenvalues. Even the analytic eigenpairs is not known,
 from these tables, we can find the numerical
eigenvalues $\lambda_{j,h}$ increases when $h$ decreases which shows that $\lambda_{j,h}$ is a lower
bound of the exact eigenvalue $\lambda_j$.

\begin{table}[]
\centering \caption{Discrete eigenvalues for $\eps=0.1$ and $k=1$.}\label{Tables_Exam2_1}
\begin{tabular}{|c|c|c|c|c|c|c|c|c|}
\hline
$h$    &    1/4   &    1/8   &    1/16  &    1/32  &    1/64  &    1/128 \\ \hline \hline
$\lambda_{1,h}$   & 8.0444  & 9.1197  & 9.4787  & 9.5893  & 9.6234  & 9.6343  \\ \hline
$\lambda_{2,h}$   & 12.1745  & 14.2566  & 14.9345  & 15.1263  & 15.1782  & 15.1922  \\ \hline
$\lambda_{3,h}$   & 15.0478  & 18.2342  & 19.3145  & 19.6240  & 19.7083  & 19.7309  \\ \hline
$\lambda_{4,h}$   & 19.7958  & 26.1541  & 28.5461  & 29.2554  & 29.4501  & 29.5024  \\ \hline
$\lambda_{5,h}$   & 20.2283  & 27.6970  & 30.6403  & 31.5476  & 31.8078  & 31.8819  \\ \hline
$\lambda_{6,h}$   & 23.7850  & 34.6725  & 39.3995  & 40.8900  & 41.3124  & 41.4292  \\ \hline
\end{tabular}
\end{table}

\begin{table}[]
\centering \caption{Discrete eigenvalues for $\eps=0.05$ and $k=1$.}\label{Tables_Exam2_2}
\begin{tabular}{|c|c|c|c|c|c|c|c|c|}
\hline
$h$     &    1/4   &    1/8   &    1/16  &    1/32  &    1/64  &    1/128 \\ \hline \hline
$\lambda_{1,h}$   & 8.0934  & 9.1472  & 9.4896  & 9.5931  & 9.6247  & 9.6346  \\ \hline
$\lambda_{2,h}$   & 12.2877  & 14.3238  & 14.9615  & 15.1356  & 15.1813  & 15.1931  \\ \hline
$\lambda_{3,h}$   & 15.2218  & 18.3444  & 19.3598  & 19.6398  & 19.7134  & 19.7325  \\ \hline
$\lambda_{4,h}$   & 20.1002  & 26.3819  & 28.6452  & 29.2905  & 29.4615  & 29.5060  \\ \hline
$\lambda_{5,h}$   & 20.5464  & 27.9527  & 30.7545  & 31.5885  & 31.8212  & 31.8861  \\ \hline
$\lambda_{6,h}$   & 24.2289  & 35.0750  & 39.5886  & 40.9586  & 41.3349  & 41.4363  \\ \hline
\end{tabular}
\end{table}

\begin{table}[]
\centering \caption{Discrete eigenvalues for $\eps=0$ and $k=1$.}\label{Tables_Exam2_3}
\begin{tabular}{|c|c|c|c|c|c|c|c|c|}
\hline
$h$   &    1/4   &    1/8   &    1/16  &    1/32  &    1/64  &    1/128 \\ \hline \hline
$\lambda_{1,h}$   & 8.1405  & 9.1725  & 9.4992  & 9.5963  & 9.6257  & 9.6349  \\ \hline
$\lambda_{2,h}$   & 12.3971  & 14.3860  & 14.9856  & 15.1437  & 15.1838  & 15.1939  \\ \hline
$\lambda_{3,h}$   & 15.3906  & 18.4466  & 19.4000  & 19.6534  & 19.7177  & 19.7338  \\ \hline
$\lambda_{4,h}$   & 20.3977  & 26.5942  & 28.7336  & 29.3206  & 29.4710  & 29.5088  \\ \hline
$\lambda_{5,h}$   & 20.8576  & 28.1912  & 30.8564  & 31.6235  & 31.8322  & 31.8894  \\ \hline
$\lambda_{6,h}$   & 24.6655  & 35.4522  & 39.7576  & 41.0175  & 41.3535  & 41.4419  \\ \hline
\end{tabular}
\end{table}

\begin{table}[]
\centering \caption{Discrete eigenvalues for $\eps=0.1$ and $k=2$.}\label{Tables_Exam2_4}
\begin{tabular}{|c|c|c|c|c|c|c|c|c|}
\hline
$h$     &    1/4   &    1/8   &    1/16  &    1/32  &    1/64  &    1/128 \\ \hline \hline
$\lambda_{1,h}$   & 9.5538  & 9.6152  & 9.6306  & 9.6362  & 9.6383  & 9.6392  \\ \hline
$\lambda_{2,h}$   & 15.0957  & 15.1903  & 15.1967  & 15.1972  & 15.1972  & 15.1973  \\ \hline
$\lambda_{3,h}$   & 19.5148  & 19.7251  & 19.7383  & 19.7391  & 19.7392  & 19.7392  \\ \hline
$\lambda_{4,h}$   & 28.7653  & 29.4755  & 29.5185  & 29.5213  & 29.5215  & 29.5215  \\ \hline
$\lambda_{5,h}$   & 30.6553  & 31.7870  & 31.8860  & 31.9036  & 31.9091  & 31.9113  \\ \hline
$\lambda_{6,h}$   & 39.0485  & 41.2933  & 41.4490  & 41.4674  & 41.4719  & 41.4735  \\ \hline
\end{tabular}
\end{table}

\begin{table}[]
\centering \caption{Discrete eigenvalues for $\eps=0.05$ and $k=2$.}\label{Tables_Exam2_5}
\begin{tabular}{|c|c|c|c|c|c|c|c|c|}
\hline
$h$   &    1/4   &    1/8   &    1/16  &    1/32  &    1/64  &    1/128 \\ \hline \hline
$\lambda_{1,h}$   & 9.5555  & 9.6155  & 9.6307  & 9.6362  & 9.6383  & 9.6392  \\ \hline
$\lambda_{2,h}$   & 15.1012  & 15.1908  & 15.1968  & 15.1972  & 15.1972  & 15.1973  \\ \hline
$\lambda_{3,h}$   & 19.5283  & 19.7264  & 19.7384  & 19.7392  & 19.7392  & 19.7392  \\ \hline
$\lambda_{4,h}$   & 28.8132  & 29.4796  & 29.5188  & 29.5213  & 29.5215  & 29.5215  \\ \hline
$\lambda_{5,h}$   & 30.7260  & 31.7934  & 31.8867  & 31.9037  & 31.9092  & 31.9113  \\ \hline
$\lambda_{6,h}$   & 39.2028  & 41.3060  & 41.4501  & 41.4675  & 41.4719  & 41.4735  \\ \hline
\end{tabular}
\end{table}

\begin{table}[]
\centering \caption{Discrete eigenvalues for $\eps=0$ and $k=2$.}\label{Tables_Exam2_6}
\begin{tabular}{|c|c|c|c|c|c|c|c|c|}
\hline
$h$     &    1/4   &    1/8   &    1/16  &    1/32  &    1/64  &    1/128 \\ \hline \hline
$\lambda_{1,h}$   & 9.5571  & 9.6157  & 9.6308  & 9.6362  & 9.6383  & 9.6392  \\ \hline
$\lambda_{2,h}$   & 15.1064  & 15.1913  & 15.1968  & 15.1972  & 15.1972  & 15.1973  \\ \hline
$\lambda_{3,h}$   & 19.5410  & 19.7275  & 19.7385  & 19.7392  & 19.7392  & 19.7392  \\ \hline
$\lambda_{4,h}$   & 28.8577  & 29.4833  & 29.5191  & 29.5213  & 29.5215  & 29.5215  \\ \hline
$\lambda_{5,h}$   & 30.7916  & 31.7993  & 31.8873  & 31.9038  & 31.9092  & 31.9113  \\ \hline
$\lambda_{6,h}$   & 39.3450  & 41.3176  & 41.4512  & 41.4676  & 41.4719  & 41.4735  \\ \hline
\end{tabular}
\end{table}

\section{Concluding remarks}
In this paper, we apply the weak Galerkin method to solve the
eigenvalue problems and the corresponding convergence analysis is
also given. Furthermore, we also analyze the lower-bound property of
the weak Galerkin method. {\color{blue}Compared with the classical
nonconforming finite element method which can provide lower bound
approximation by linear element with only the second order
convergence, the weak Galerkin method can provide lower bound
approximation with a high order convergence (larger than $2$)}.

In the future, it is required to design the efficient solver for the algebraic eigenvalue problems
derived by the weak Galerkin method.

\end{document}